\theoremstyle{definition}\newtheorem{definition}{Definition}[section]
\theoremstyle{definition}\newtheorem{proposition}[definition]{Proposition}
\theoremstyle{definition}\newtheorem{remark}[definition]{Remark}
\theoremstyle{definition}\newtheorem{theorem}[definition]{Theorem}
\theoremstyle{definition}
\theoremstyle{definition}
\theoremstyle{definition}
\numberwithin{equation}{section}
\newcommand{\lrank}{\operatorname{rank}}
\newcommand{\img}{\operatorname{img}}
\newcommand{\Diff}{\operatorname{Diff}}
\newcommand{\R}{\mathbb{R}}
\newcommand{\E}{\mathcal{E}}
\newcommand{\pp}[2]{\frac{\partial #1}{\partial #2}}
\newcommand{\n}{^{(n)}}
\newcommand{\ii}{^{(\infty )}}
\newcommand{\SL}{\mathrm{SL}}
\newcommand{\hD}{\widehat{D}}
\newcommand{\hQ}{\widehat{Q}}
\newcommand{\hP}{\widehat{P}}
\newcommand{\tE}{\tilde{\mathcal{E}}}
\newcommand{\tQ}{\tilde{Q}}
\newcommand{\ckQ}{\check{Q}}
\newcommand{\ckmu}{\check{\mu}}
\newcommand{\tomega}{\tilde{\omega}}
\newcommand{\ckomega}{\check{\omega}}
\newcommand{\tmu}{\tilde{\mu}}
\newcommand{\tnu}{\tilde{\nu}}
\newcommand{\vv}{\mathbf{v}}
\newcommand{\bl}{\pmb{\lambda}}
\newcommand{\bt}{\pmb{\tau}}
\newcommand{\btn}{\bt\n}
\newcommand{\bs}{\pmb{\sigma}}
\newcommand{\tbs}{\tilde{\bs}}
\newcommand{\tbt}{\tilde{\bt}}
\newcommand{\tvarrho}{\tilde{\varrho}}
\newcommand{\bsn}{\bs\n}
\newcommand{\op}{\overline{p}}
\newcommand{\ou}{\overline{u}}
\newcommand{\ox}{\overline{x}}
\newcommand{\D}{\mathcal{D}}
\newcommand{\G}{\mathcal{G}}
\newcommand{\J}{\mathcal{J}}
\newcommand{\rJ}{\mathrm{J}}
\newcommand{\Ji}{\J^\infty}
\newcommand{\ji}{{\rm j}_\infty}
\newcommand{\bO}{\mathbf{\Omega}}
\newcommand{\bvtheta}{\boldsymbol{\vartheta}}
\def\comp{\raise 1pt \hbox{$\,\scriptstyle\circ\,$}}
\def\dim{\mathop{\rm dim}\nolimits}
\begin{document}


\thispagestyle{fancy}
\fancyhead{}
\fancyfoot{}
\renewcommand{\headrulewidth}{0pt}
\cfoot{\thepage}
\rfoot{\today}

\vskip 1cm
\begin{center} {\Large Point Equivalence of Second-Order ODEs: Maximal
    Invariant Classification Order}
\vskip 1cm

\begin{tabular*}{1.0\textwidth}{@{\extracolsep{\fill}} ll}
Robert Milson\footnotemark[1] & Francis Valiquette\footnotemark[2]  \\
Department of Mathematics and Statistics & Department of Mathematics\\
Dalhousie University & SUNY at New Paltz \\
Halifax, Nova Scotia, Canada \quad B3H 3J5 & New Paltz, NY \quad 12561\\
{\tt milson@mathstat.dal.ca} & {\tt valiquef@newpaltz.edu} \\
{\tt http://www.mathstat.dal.ca/$\sim$milson/} &{\tt http://www2.newpaltz.edu/$\sim$valiquef}
\end{tabular*}
\end{center}

\footnotetext[1]{{\it Supported by NSERC grant
RGPIN-228057-2009.}}
\footnotetext[2]{{\it Supported in part by an AARMS Postdoctoral Fellowship.}}

\vskip 0.5cm\noindent {\bf Keywords}: Differential invariants, moving
frames, Painlev\'e equations, point transformations, second-order
ordinary differential equations.  \vskip 0.5cm\noindent {\bf
  Mathematics subject classification (MSC2010)}: 53A55

\vskip 1cm

\abstract{We show that the local equivalence problem of second-order
  ordinary differential equations under point transformations is
  completely characterized by differential invariants of order at most
  10 and that this upper bound is sharp.  We also demonstrate that, modulo
  Cartan duality and point transformations, the Painlev\'e--I equation
  can be characterized as the simplest second-order ordinary differential equation belonging to the
  class of equations requiring 10th order jets for their
  classification.  }

\section{Introduction}

This paper is concerned with the local equivalence of second-order ordinary differential equations (ODEs) under point transformations.  This is a classical problem that has been extensively studied.  This is particularly true of the linearization problem which consists of determining when an equation $u_{xx} = f(x,u, u_x)$ is locally equivalent to $u_{xx}=0$.  Sophus Lie was the first to observe that the equation had to be cubic in the first order derivative
$$
u_{xx} = K(x,u)\, u_x^3 + L(x,u)\, u_x^2 + M(x,u)\, u_x + N(x,u)
$$
to be linearizable, \cite{L-1924}.  Precise conditions on the coefficients  $K$, $L$, $M$, $N$ were later determined by Liouville, \cite{L-1889}.  Subsequently, equivalent linearization conditions were found by many authors, \cite{C-1955,G-1989,GTW-1989,I-2004,K-1989,Q-2007,S-1997,S-1998,T-1896,Y-2004}.  Tresse was the first to give a complete generating set of differential invariants for generic second-order ODEs not constraint by differential relations, \cite{T-1896}.  He also fully characterized the equations admitting a point symmetry group (For a modern geometrical account of Tresse's paper we refer the reader to \cite{K-2009}.).  Another facet of the problem that has attracted considerable attention is the local classification of the Painlev\'e transcendents, \cite{KLS-1985,K-1989,K-2013}.  To the best of our knowledge, none of the aforementioned references studies all the different branches of the equivalence problem.  This is understandable as this is a computationally demanding task to do manually.  But with computer algebra systems becoming more and more efficient, a wide range of equivalence problems can now be codified.  It then becomes important to establish an upper bound on the number of iterations the algorithm has to go through to guarantee a complete solution.   
In geometry, and particularly in general relativity, \cite{K-1980,MP-2008}, it is common to search for the highest order differential invariants encountered in the solution of an equivalence problem.  In this paper we do the same for the point equivalence problem of second-order ODEs.  To determine the highest order differential invariants occurring in the solution of the equivalence problem we survey the different branches of the equivalence problem and focus our attention on the most singular ones as the highest order invariants will occur in these branches.

There are several ways of finding these highest order invariants.  Based on one's preference, it is possible to use Lie's infinitesimal method, \cite{O-1993}, the theory of $G$-structures, \cite{K-1989,O-1995}, or the method of equivariant moving frames, \cite{OP-2008,V-2012}.  We decided to employ the theory of equivariant moving frames to take advantage of the symbolic and algorithmic nature of the method.  The solution relies on the \emph{universal recurrence relations} which symbolically determine the exterior derivative of differential invariants.  Very little information is needed to write down these equations.  It only requires the knowledge of the infinitesimal generator of the equivalence pseudo-group and the choice of a cross-section to the pseudo-group orbits.  In particular, the coordinate expressions for the invariants are not required.  Also, the computations only involve differentiation and linear algebra and these are well handled by symbolic softwares.  In our case, we used {\sc Mathematica} to implement the computations.

The main result of this paper establishes that any regular second order ODE is classified, relative to point transformations, by its 10th order jets.  Furthermore, this bound is sharp, meaning that there exist regular ODEs that are not classified by 9th order jets.  Furthermore, we show that every equation having maximal invariant classification (IC) order of 10 is equivalent, modulo point transformations and Cartan duality, to an equation of the form
\begin{equation}\label{eq:max_realizing_ode}
u_{xx} = 6u^2 + g(x) \quad\text{with}\quad D_{x}^2[g(x)^{-1/4}] \neq 0,
\end{equation}
the ``simplest" of which is the Painlev\'e--I equation $u_{xx}=6u^2+x$ whose 10th order classifying invariant vanishes.  The branch in which equation \eqref{eq:max_realizing_ode} occurs can also be found in the works of Kamran, \cite{K-1989}, Morozov, \cite{M-2010}, and Sharipov, \cite{S-1998}, though none of them have studied the question of maximal invariant classification order.  

\section{Formalization of the problem and results}\label{section:main}

Following standard practices, \cite{K-1989,O-1995}, we let
$$
p=u_x,\qquad q=u_{xx}
$$
denote the first and second order derivatives of a single variable function $u=u(x)$.   Then, two second-order ordinary differential equations
\begin{equation*}
q=f(x,u,p)\quad\text{and}\quad Q=F(X,U,P),\qquad p=u_x,\, q=u_{xx},\; P=U_X,\, Q=U_{XX},
\end{equation*}
are (locally) point equivalent if there exists a local diffeomorphism of the plane
\begin{equation}
  \label{eq:ptxform}
  (X,U)=\psi(x,u),\quad \psi\in \Diff(\R^2),
\end{equation}
such that
\begin{equation}
  \label{eq:Qtarget}
  F(X,U,P) = \hQ(p,f(x,u,p),X_x, X_u, U_x, U_u , X_{xx} , X_{xu}, X_{uu},
  U_{xx},U_{xu}, U_{uu})
\end{equation}
where
\begin{align}
\label{eq:PQdef}
&\begin{aligned}
  P&= \hP(p, X_x, X_u, U_x, U_u) = \frac{\hD_x U}{\hD_x X} = \frac{p\,
    U_u + U_x}{p\,X_u + X_x},\\
  Q &= \hQ(p,q,X_x, X_u, U_x, U_u , X_{xx} , X_{xu}, X_{uu},
  U_{xx},U_{xu}, U_{uu})\\
\end{aligned}\\ \nonumber
&=\frac{ \hD_xP}{ \hD_x X} = \frac{\hD_x^2U \cdot \hD_x X - \hD_x U
  \cdot \hD_x^2 X}{(\hD_x X)^3} \\ \nonumber
&= \frac{(p\, X_u + X_x)\left( p^2\,
    U_{uu} + 2 p\, U_{ux} + U_{xx}\right)- (p\, U_u + U_x)\left(p^2\, X_{uu}
    + 2p\, X_{ux} + X_{xx}\right)}{(p\, X_u + X_x)^{3}}\\ \nonumber
&\quad\quad + \frac{U_u X_x - U_x X_u}{(p\, X_u + X_x)^{3}}\, q,
\end{align}
describe the transformation law for $p=u_x$ and $q=u_{xx}$, and
\begin{equation*}
 \hD_x = \partial_x + p\,\partial_u + q\,\partial_p
\end{equation*}
is the truncation of the usual total derivative operator.  The equations \eqref{eq:PQdef} together with the usual contact
conditions, constitute a quasi-linear system of PDEs in the pseudo-group jet
variables $X,U, X_x, X_u, U_x, U_u$.  This system is over-determined,
owing to higher order integrability conditions that, for sufficiently
high order, govern the outcome of the equivalence problem.  These
integrability conditions take the form of equalities between differential invariants
of the two equations leading to Definition \ref{def:ICorder} below.

Taking repeated derivatives of \eqref{eq:Qtarget} with respect to
$X,U,P$ yields the following necessary conditions for equivalence:
\begin{gather*}
  F_{ijk}(X,U,P)=\frac{\partial^{i+j+k} F(X,U,P)}{\partial X^i\partial U^j\partial
    P^k} = \hQ_{ijk}(p,q,q_x, q_u, q_p, \ldots; X_x, X_u,
  U_x, U_u,\ldots),
\end{gather*}
where
\begin{equation*}
  \hQ_{ijk}(p,q^{(i+j+k)};\psi_0^{(i+j+k+2)}),\qquad \psi
  \in \Diff(\R^2),
\end{equation*}
describes the transformation law for the partial derivatives 
$$
q_{ijk}=\frac{\partial^{i+j+k}q}{\partial x^i \partial u^j \partial p^k}
$$ 
under the point transformations \eqref{eq:ptxform}, and
where
\begin{equation}
  \label{eq:qndef}
  \begin{aligned}
    q\n &= \{ q_{ijk} \colon 0\leq i+j+k \leq n \},\qquad q_{ijk} \colon\hskip -0.2cm=
    q_{x^iu^jp^k};\\
    \psi\n &= \{ (X_{ij}, U_{ij})\colon 0\leq i+j \leq n \},\quad\,
    X_{ij} \colon\hskip -0.2cm=  X_{x^iu^j},\;\; U_{ij} \colon\hskip -0.2cm= U_{x^iu^j};\\
    \psi_0\n &= \{ ( X_{ij}, U_{ij}) \colon 1\leq i+j \leq n\} ;
\end{aligned}
\end{equation}
denote the indicated jets.

\begin{definition}\label{def:ICorder}
  We say that a second-order ODE $q=f(x,u,p)$ is classified by $n$th
  order jets if the \emph{algebraic} consistency of the system
  \begin{align*}
    P &= \hP(p, \psi_0^{(1)}) ,\\
    F_{ijk}(X,U,P) &= \hQ_{ijk}(p,f^{(i+j+k)}(x,u,p),
    \psi_0^{(i+j+k+2)}) ,\quad 0\leq i+j+k \leq n;
  \end{align*}
  is sufficient for the existence of a point transformation relating
  $q=f(x,u,p)$ and $Q=F(X,U,P)$.  We call the smallest such $n$ the
  \emph{IC (invariant classification) order} of the differential
  equation.
\end{definition}
\noindent
\textbf{Note:} In the formulation of the above definition it must be
understood that the pseudo-group variables $(X_x,X_u,U_x,U_u,\ldots)$
are to be treated as auxiliary independent variables rather than
functions of $x$ and $u$.


The question that motivates us here is the following:
\begin{quote}
  \emph{What is the maximal jet order required for the invariant
    classification of a second-order ordinary differential equation up
    to local point transformations?}
\end{quote}

By way of an example, the well-known Linearization Theorem for
second-order ordinary differential equations, \cite{G-1989,
  GTW-1989,O-1995, SML-1987}, states that $q=f(x,u,p)$ is point
equivalent to the trivial equation $Q=U_{XX}=0$ if and only if the
fourth-order relative\footnote{A relative invariant is a function whose value is multiplied by a certain factor, known as a multiplier, under pseudo-group transformations.  An invariant is a relative invariant with multiplier equal to one.} invariants
\begin{equation}\label{linearization conditions}
\begin{gathered}
q_{pppp}\equiv 0,\\
\hD_x^2 (q_{pp}) - 4 \hD_x (q_{u p}) - q_{p} \hD_x
(q_{pp}) + 6 q_{uu} - 3q_u q_{pp} + 4q_{p} q_{u p}\equiv 0,
\end{gathered}
\end{equation}
are identically zero.  From this it follows that the linearizable
class has IC order equal to $4$.  On the other hand, the invariant
classification of general second-order equations will require higher
order jets.  The complete answer regarding the maximal order is given
below in Theorem \ref{thm:main}.

A well-posed equivalence problem requires some notion of
regularity. Therefore, before proceeding further, we must impose a
technical rank assumption.  Owing to the covariance of the
transformation laws, \cite{OP-2008}, the functions $\hQ_{ijk}$ are invariant with
respect to point transformations.  For a smoothly defined ODE
$q=f(x,u,p)$, let 
\[q\n = f\n(x,u,p)\] denote the $n$th order jet of the defining
function.  The composition of $\hQ_{ijk}$ and $f\n$ produces an
invariant of the ODE,
\begin{equation}
  \label{eq:Qijkdef}
  Q_{ijk}=\hQ_{ijk}\left(p,f^{(i+j+k)}(x,u,p),\psi_0^{(i+j+k+2)}\right),
\end{equation}
which we call a \emph{lifted invariant}\footnote{By contrast,
  \emph{absolute differential invariants} are functions of $x,u,p$
  only.  To construct absolute invariants one eliminates, by normalization, the pseudo-group
  variables from the lifted invariants of sufficiently high order.  This is the essence of the (equivariant) moving frame method, \cite{OP-2008}.} to
signify its dependence on the pseudo-group variables $\psi_0\n =
(X_x,X_u,U_x,U_u,\ldots)$.


The above remarks lead us to the following definition, after which we
will be ready to state our main result.
\begin{definition} 
  \label{def:regular} 
  We say that a smoothly defined ODE $q=f(x,u,p)$ is \emph{regular} if
  for every $n\geq 0$ the rank of the lifted invariants $\{ Q_{ijk}
  \colon 0\leq i+j+k\leq n \}$ is constant on the differential equation $q=f(x,u,p)$.
\end{definition}

\begin{remark}
Irregular differential equations that fail to satisfy Definition \ref{def:regular} are more difficult to classify and require more care, \cite{O-1995}.  Following customary practice, these equations are omitted in this paper.
\end{remark}

We now can state the main result of this paper.

\begin{theorem}\label{thm:main}
  Every regular second-order ODE is classified, relative to point
  transformations, by its 10th order jets.  This bound is sharp ---
  meaning that there exist regular ODEs that are not classified by 9th
  order jets.  Furthermore, every ODE having the maximal IC order of
  10 is equivalent, modulo point transformations and Cartan duality
  (see Appendix \ref{duality appendix} for the definition), to an
  equation of the form
  \begin{equation}
    \label{eq:ord4form}
     u_{xx} = 6u^2 + g(x) \quad\text{with}\quad D_{x}^2[g(x)^{-1/4}] \neq 0. 
  \end{equation}
\end{theorem}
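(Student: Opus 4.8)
The plan is to solve the equivalence problem by the equivariant moving frame method, exploiting the universal recurrence relations to track, order by order, the functionally independent differential invariants of $q=f(x,u,p)$ and to locate the jet order at which the invariant signature closes. First I would write down the infinitesimal generators of the point pseudo-group $\Diff(\R^2)$ acting on the bundle coordinatized by $(x,u,p,q,q_{ijk})$, prolong the action to the jet order needed, and form the lifted invariants $Q_{ijk}$ of \eqref{eq:Qijkdef}. Normalizing the pseudo-group parameters against a chosen cross-section produces phantom invariants and genuine invariants, and the recurrence relations then deliver the exterior derivatives of the latter purely symbolically; from these I can read off, at each order, how many new functionally independent invariants appear and which frame parameters remain free. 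Since only the infinitesimal data and the cross-section are required, the coordinate formulas for the invariants never need to be computed by hand.

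The IC order of Definition \ref{def:ICorder} is the smallest $n$ at which $\{Q_{ijk}\colon i+j+k\le n\}$ already determines all higher invariants, i.e. the order at which the rank of the lifted invariants stabilizes; by the regularity hypothesis of Definition \ref{def:regular} this rank is constant on the equation, so the notion is well posed. The crux is that the normalization branches according to which relative invariants vanish, and, as noted before the theorem, the highest-order invariants must occur in the most singular branches. I would therefore organize the computation as a decision tree: at each order, split on the vanishing or non-vanishing of the newly available relative invariants (beginning with $q_{pppp}$ and the second relative invariant of \eqref{linearization conditions}), and in each branch continue prolonging until the frame is fully determined and the signature closes. The generic branch recovers Tresse's classical invariants and closes early; the deepest chain of degeneracies is the one to watch.

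To establish the upper bound I would verify, branch by branch, that the signature always closes by order $10$, so that no regular ODE requires $11$th order jets. The bulk of the work --- and the main obstacle --- is the bookkeeping in the most singular branch: one must show that each successive degeneracy is forced, that a genuinely new classifying invariant can still be produced as late as order $10$, and, crucially, that no further obstruction appears only at order $11$ or higher. This is where the symbolic recurrence relations are indispensable, since the coordinate expressions are hopeless by hand; I expect to carry out this step in \textsc{Mathematica}, tracking carefully the dimensions of the successive normalization strata so as to certify both that the extremal branch is nonempty and that the tree terminates.

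For sharpness and the structural statement I would analyze this extremal branch directly. Reducing its defining relations to normal form, and using point transformations together with Cartan duality (Appendix \ref{duality appendix}) to absorb the residual freedom, I would show that every equation in the branch is equivalent to \eqref{eq:ord4form}, and that the order-$10$ classifying invariant of this family is nonconstant precisely when $D_x^2[g(x)^{-1/4}]\neq 0$. This simultaneously exhibits regular ODEs --- for instance the Painlev\'e--I equation $u_{xx}=6u^2+x$ --- whose classification genuinely demands $10$th order jets, so that the bound $10$ is attained and hence sharp, and it singles out Painlev\'e--I as the simplest member, its order-$10$ classifying invariant being constant and in fact vanishing.
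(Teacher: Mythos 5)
Your overall architecture matches the paper's: equivariant moving frames, the universal recurrence relations, a branch-by-branch rank analysis organized as a decision tree, and a final reduction of the extremal branch to the normal form \eqref{eq:ord4form}. The first split you name (on $q_{pppp}$ and the second Liouville--Tresse relative invariant, i.e.\ on the universal invariants $\tQ_{P^4}$ and $\tQ_{P^2X^2}$) is exactly where the paper branches into its four cases, and the paper halves the remaining work by noting that the two mixed cases are exchanged by Cartan duality --- which is also why the final classification is stated only modulo duality. As a plan, then, this is the right route; but it defers the decisive steps, and at one of them the method you propose would not suffice.

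That step is the realization of the extremal branch. You propose to certify nonemptiness by ``tracking the dimensions of the successive normalization strata,'' but the maximal class is cut out by infinitely many invariant conditions (in the paper's sub-case \textbf{III.2}: all invariants of order $\le 6$ normalized or forced constant, then $C_1=\dots=C_4=0$ forced by nonconstancy of $I_7$, then all higher invariants generated by $D_X$-derivatives of $I_7$), and nonemptiness of each finite-order stratum does not yield an ODE whose infinite jet lies in their intersection. What is actually needed is an existence theorem for a coframe realizing the prescribed structure equations \eqref{structure equations-case3} together with \eqref{dI}; and because the structure coefficients involve the \emph{nonconstant} invariant $I_7$, the classical Cartan integration theorem for coframes with constant invariants does not apply --- the paper must invoke Bryant's generalization (Theorem \ref{Cartan theorem}) and then integrate the structure equations explicitly to arrive at \eqref{representative ODE}. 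Without this (or, alternatively, a direct computation of the full invariant signature of a concrete representative such as $u_{xx}=6u^2+x$), neither sharpness nor the ``every maximal equation is equivalent to \eqref{eq:ord4form}'' clause is established. Two smaller omissions of the same kind: capping the fully generic branch at IC order 9 requires the rank count $\tvarrho_4=2$, $\tvarrho_5\ge 5$, and eliminating the competing degenerate sub-branch requires the syzygy argument showing $\ckQ_{P^2X^4}$ and $\ckQ_{P^3X^3}$ cannot both vanish; your tree should record why only the one configuration survives to order 10.
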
 


The proof of Theorem \ref{thm:main} boils down to identifying the
branch(es) of the equivalence problem where non-constant absolute
invariants appear as late and slowly as possible during the course of
Cartan's normalization procedure.  Taking advantage of the
\emph{universal recurrence relations}, we first give a proof
of Theorem \ref{thm:main} which does not require explicit 
coordinate expressions for the differential invariants. This is possible since
the universal recurrence relations can be written down knowing only the 
expression for the prolonged infinitesimal generators of the pseudo-group 
action and the choice of a cross-section.  Using the fact that there is a notion of
duality among second-order ordinary differential equations (see
Appendix \ref{duality appendix}) our conclusion is that there exist
two families of differential equations (dual to each other) that
achieve the maximal IC order.  By a generalization of Cartan's
Integration Theorem, \cite{BCGGG-1991}, we are then able to show that
one of the two families of differential equations depends on one
arbitrary function of the independent variable.  Finally, we integrate
the structure equations for the canonical coframe and derive form
\eqref{eq:ord4form}.

Second-order ordinary differential equations equivalent to
\eqref{eq:ord4form} by point transformations and Cartan duality admit
three fundamental absolute invariants $I_7, I_8, I_9$ of the indicated
order, and a tenth order invariant $I_{10}$ which is functionally
dependent on $I_9$.  Relative to the normal form \eqref{eq:ord4form},
these invariants can be expressed as
\begin{equation}
  \label{eq:I7-10def}
  I_7 = \frac{p}{u^{3/2}},\qquad 
  I_8=\frac{g(x)}{u^2},\qquad
  I_9=\frac{(g^\prime(x))^4}{(g(x))^5}, \qquad 
  I_{10}=\frac{g(x) g''(x)}{(g'(x))^2}.  
\end{equation}
The functional relation between $I_9$ and $I_{10}$ is the essential
classifying relation for equations of maximal IC order.


The class of equations \eqref{eq:ord4form} requiring 10th order jets
for their classification includes the Painlev\'e--I equation as the
subclass when $g(x)$ is linear in $x$.   
As a Corollary to Theorem \ref{thm:main} we
are able to give the following characterization of
Painlev\'e--I.
\begin{theorem}
  \label{thm:PI}
  The equivalence class of the Painlev\'e--I equation can be
  characterized as the subclass of second-order ODEs requiring 10th
  order jets for their classification and satisfying
  \[ q_{pppp}=0,\qquad I_{10} = 0. \]
\end{theorem}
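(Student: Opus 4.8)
The plan is to obtain Theorem \ref{thm:PI} as a fairly direct consequence of Theorem \ref{thm:main}, by first reducing to the normal form \eqref{eq:ord4form} and then translating the two extra hypotheses $q_{pppp}=0$ and $I_{10}=0$ into explicit conditions on the arbitrary function $g$. By Theorem \ref{thm:main}, any equation requiring 10th order jets for its classification is equivalent, modulo point transformations and Cartan duality, to $u_{xx}=6u^2+g(x)$ with $D_x^2[g(x)^{-1/4}]\neq 0$. Two things then remain: dispose of the Cartan-duality ambiguity, and show that within the surviving one-parameter family the condition $I_{10}=0$ isolates precisely the Painlev\'e--I equivalence class.

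For the first step I would argue that $q_{pppp}=0$ is exactly the relative-invariant condition that selects the family \eqref{eq:ord4form} over its Cartan dual inside the maximal branch. Since the right-hand side $6u^2+g(x)$ is independent of $p$, the normal form satisfies $q_p=q_{pp}=q_{ppp}=q_{pppp}=0$ identically, so the hypothesis holds automatically on this family; the content of $q_{pppp}=0$ is that the dual family fails it. Establishing this selection rule from the explicit duality map of Appendix \ref{duality appendix} is, I anticipate, the main obstacle of the proof: one must check that the duality of the appendix carries the $p$-independent normal form to an equation for which the relative invariant $q_{pppp}$ (equivalently, the pair of relative invariants in \eqref{linearization conditions}) does not vanish, so that imposing $q_{pppp}=0$ forces the equation to be point-equivalent, without recourse to duality, to \eqref{eq:ord4form}. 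Everything after this point is a direct computation.

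For the second step I would use the explicit expression $I_{10}=g\,g''/(g')^2$ from \eqref{eq:I7-10def}. Working locally on the open set where the invariants are defined, regularity together with the non-degeneracy $D_x^2[g(x)^{-1/4}]\neq 0$ — which unpacks to $5(g')^2-4gg''\neq 0$, that is $I_{10}\neq \tfrac54$ — guarantees $g\neq 0$ and $g'\neq 0$, so the hypothesis $I_{10}=0$ is equivalent to $g''\equiv 0$. Hence $g(x)=ax+b$ with $a\neq 0$, the value $a=0$ being excluded because a constant $g$ violates non-degeneracy. A translation in $x$ removes $b$, and the scaling $X=\lambda x,\ U=\mu u$ with $\lambda^2=\mu^3$ preserves the coefficient $6$ of $u^2$ while sending $g$ to $\mu^2 g(X/\lambda)$; taking $\mu=a^{-2}$ and $\lambda=a^{-3}$ normalizes the linear coefficient to $1$ and produces $U_{XX}=6U^2+X$, the Painlev\'e--I equation. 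This shows that every equation meeting all three hypotheses is point-equivalent to Painlev\'e--I. For the converse, Painlev\'e--I corresponds to $g(x)=x$, which is linear with $g'\equiv 1\neq 0$, so it lies in the maximal IC class of Theorem \ref{thm:main}, satisfies $q_{pppp}=0$ trivially, and has $I_{10}=x\cdot 0/1\equiv 0$; the three conditions therefore characterize its equivalence class exactly.
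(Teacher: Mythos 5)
Your proposal follows the paper's own route: the authors present Theorem \ref{thm:PI} as a straightforward corollary of Theorem \ref{thm:main}, with $q_{pppp}=0$ (equivalently $\tQ_{P^4}\equiv 0$, i.e.\ membership in branch \textbf{III} rather than its dual branch \textbf{II} --- so the ``selection rule'' you flag as the main obstacle is already delivered by the branch analysis, with no need to compute the dual equation explicitly) ruling out the Cartan-dual family, and $I_{10}=g\,g''/(g')^2=0$ forcing $g$ to be linear and nonconstant, hence reducible to $g(x)=x$ by an affine change of variables. The only blemish is arithmetic: preserving the coefficient $6$ under $X=\lambda x$, $U=\mu u$ requires $\mu\lambda^{2}=1$ (not $\lambda^{2}=\mu^{3}$), after which $\lambda^{5}=a$ normalizes $g(x)=ax$ to $g(X)=X$; your choice $(\lambda,\mu)=(a^{-3},a^{-2})$ fixes the linear term but does not preserve the $6$.
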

\noindent
The condition $q_{pppp}=0$ distinguishes the Painlev\'e--I equation
from it's Cartan dual. The vanishing of $I_{10}=0$ means that the
Painlev\'e--I equation can be characterized as the ``simplest''
second-order differential equation belonging to the class of maximal
IC order equations.  We will derive \eqref{eq:I7-10def} at the end of
Section \ref{sect:integration}.  Thereafter Theorem \ref{thm:PI}
follows as a straight-forward Corollary of Theorem \ref{thm:main}.

\begin{remark}
An invariant characterization of the Painlev\'e--I and II equations up to fibre
preserving transformations and point transformations were given in \cite{KLS-1985,K-1989}.  In the latter case, the characterization obtained is a particular case of our more general result when $g(x)=x$.  Other works devoted to the Painlev\'eve--I and II equations can be found in \cite{BB-1999,D-2009,K-2009-0,K-2013}.

\end{remark}

\section{The geometric setting}\label{section:geosetting}

In this section we introduce the geometric setting for the
equivalence problem of second-order ordinary differential equations under point transformations.
The key formalism is a certain groupoid and two sets of fundamental
equations: the \emph{universal recurrence relations} for the prolonged
jet coordinates, and the \emph{Maurer--Cartan structure equations} of
the infinite-dimensional Lie pseudo-group $\D =
\operatorname{Diff}(\R^2)$ \cite{OP-2005, OP-2008}.

Let $N^3 = \rJ^1(\R,\R)$ and $M^4=\rJ^2(\R,\R)$ denote, respectively,
the first- and second-order jet space of curves $u=u(x)$.  Setting $p=u_x$ and $q=u_{xx}$, local coordinates are given by
$$
N^3\colon x,\, u,\, p;\qquad M^4\colon x,\, u,\, p,\, q.
$$
A smoothly defined second-order ordinary differential equation
$q=f(x,u,p)$ is then identified as a smooth section $f\colon N^3 \to M^4$.  Let $\Gamma(N^3,M^4)$ denote the space of sections $f\colon N^3 \to M^4$, then $\J\n=\rJ^n \Gamma(N^3,M^4)$ can be identified as the bundle of $n\text{th}$-order jets of
second-order ODEs, \cite{O-1993}.  Local coordinates are given by
$$
\J\n\colon x,\,u,\,p,\, q\n;
$$
where $q\n=(\ldots\, q_{x^i u^j p^k}\, \ldots)$ collects the derivative coordinates of order $\leq n$.
The prolongation of a differential equation $q=f(x,u,p)$ yields a section of $\J\n$ which we denoted $f\n\colon N^3 \to \J\n$.  Next,
let $\G\subset \Diff(\R^4)$ denote the prolongation of $\Diff(\R^2)$
to $M^4$.  In local coordinates, the pseudo-group $\G$ is given by \eqref{eq:ptxform} and \eqref{eq:PQdef}.  
Geometrically, $\G$ specifies how a second-order ordinary differential equation transforms under a point transformation.  
For $0\leq n \leq \infty$, let $\D\n \to \R^2$ denote the
bundle of $n\text{th}$ order diffeomorphism jets of $\D=\Diff(\R^2)$ and similarly let
$\G\n$ denote the $n\text{th}$ order pseudo-group jet bundle of $\G$.  Local coordinates are given by
$$
\D\n\colon x,u,X,U,\psi_0\n,
$$
where $\psi_0\n = (\ldots\, X_{x^i u^j},\, U_{x^i u^j}\, \ldots)$ denotes the derivative coordinates of order 1 up to $n$.  Now,
let $\E\n \to \J\n$ denote the $n\text{th}$ order \emph{lifted bundle}
obtained by pulling back $\D^{(n+2)}\cong\G\n \to \R^2$ via the
projection $ \J\n \to M^4\to N^3\to \R^2$, \cite{OP-2008}.  Canonical bundle coordinates on $\E\n$ are
$$
\E\n\colon x,u,p,q\n, X,U,\psi_0^{(n+2)}.
$$

We note that the $n\text{th}$-order diffeomorphism jet
$\psi\n=(X,U,\psi\n_0)=(X,U,X_x,X_u,$ $U_x,U_u,\ldots)$ has
\[ 2\times(1+2+3+\ldots + n+1) = (n+1)(n+2) \] components and that the
$n\text{th}$-order jet $q\n=(q,q_x,q_u,q_p,\ldots) \in \J\n|_{(x,u,p)}$ has
$\binom{n+3}{3}$ components.   Hence
\begin{gather*}
  \dim \J\n = 3+\binom{n+3}{3},\qquad \dim \D\n = 2+(n+1)(n+2),\\
  \dim \E\n = 3+\binom{n+3}{3} + (n+3)(n+4).
\end{gather*}

Next, we define the groupoid structure of the lifted bundle $\E\n$.
The source map $\bsn\colon\E\n\to \J\n$ is the standard projection given by
$(x,u,p,q^{(n)})$; while the target map $\bt\n\colon \mathcal{E}\n \to \J^n$ is
the projection given by the prolonged action $\left(X,U,
  P,Q\n\right)$, where
\[P = \hP(p,\psi_0^{(1)}) \] as per \eqref{eq:PQdef} and
\[ Q\n = \hQ\n(p,q\n, \psi_0^{(n+2)}) =  \left\{
\hQ_{ijk}(p,q\n,\psi_0^{(i+j+k+2)}): 0\leq i+j+k\leq n\right\}.\]

The diffeomorphism pseudo-group $\D=\Diff(\R^2)$ has a prolonged
action on $\J\n$ and two dual prolonged actions on $\E\n$. Given a
point transformation $(X,U)=\varphi(x,u)\in \D$ we define the prolonged
actions
\begin{equation*}
  \varphi\n \colon \J\n \to \J\n,\qquad \varphi_L\n \colon \E\n \to
  \E\n,\qquad 
  \varphi_R\n\colon\E\n \to \E\n 
\end{equation*}
according to  \cite{OP-2005}:
\begin{equation}
  \label{eq:phindef}
\begin{aligned}
  &
  \varphi\n\colon(x,u,p,q\n) \mapsto (X,U,\hP(p,\varphi_0^{(1)}),
  \hQ\n(p,q\n,\varphi_0^{(n+2)}))\,;\\ 
  &\varphi_L\n\colon(x,u,p,q\n,\psi\n) \mapsto
  (x,u,p,q\n,(\varphi\circ\psi)\n)\,;\\
  &\varphi_R\n\colon(x,u,p,q\n,\psi\n) \mapsto
  (X,U,\hP(p,\varphi_0^{(1)}), \hQ\n(p,q\n,\varphi_0^{(n+2)}),
  (\psi\circ\varphi^{-1})\n )\,.
\end{aligned}
\end{equation}
From the above definitions, it follows immediately that the source
projection $\bsn$ is $\varphi_L$-invariant and $\varphi_R$-equivariant:
\begin{equation*}
  \bsn\circ \varphi_L\n = \bsn,\qquad \bsn\circ \varphi_R\n = \varphi\n \circ
  \bsn\,;
\end{equation*}
and that, dually, the target projection is $\varphi_R$-invariant and
$\varphi_L$-equivariant: 
\begin{equation}
  \label{eq:tinvequiv}
  \btn\circ \varphi_R\n = \btn,\qquad \btn\circ \varphi_L\n = \varphi\n \circ
  \btn\,. 
\end{equation}

A coframe on $\J\ii$ is given by the basic horizontal one-forms
\begin{subequations}\label{eq:Jcoframe}
\begin{equation}
  dx,\qquad du,\qquad dp, 
\end{equation}
and the contact one-forms
\begin{equation}
\theta_{ijk}= dq_{ijk} - q_{i+1,j,k}\, dx - q_{i,j+1,k} \,du -
  q_{i,j,k+1}\, dp,\qquad i,j,k \geq 0.
\end{equation}
\end{subequations}
The standard coframe on $\D\ii$ is spanned by $dx, du$ together with
the group forms
\begin{equation}
  \label{eq:Upsilondef}
  \Upsilon_{ij} = dX_{ij} - X_{i+1,j}\, dx - X_{i,j+1}\, du,\quad
 \Psi_{ij} = dU_{ij} - U_{i+1,j}\, dx - U_{i,j+1}\, du,\quad i,j\geq0,
\end{equation}
while a right-invariant coframe on $\D\ii$ consists of the one-forms
\begin{equation}
  \label{eq:omxudef}
  \omega^x = X_x \,dx + X_u\, du,\qquad
  \omega^u = U_x\, dx + U_u \,du 
\end{equation}
and the Maurer--Cartan one-forms
\begin{equation}\label{eq:mcforms}
\mu_{ij}= \mu_{X^i U^j},\qquad \nu_{ij}=\nu_{X^i U^j},\qquad i,j \geq 0.
\end{equation}
The latter are defined, implicitly, by taking formal derivatives of
the relations
\[ \Upsilon= \mu,\qquad \Psi= \nu, \] with respect to $x,u$,
and then solving for the $\mu_{ij}, \nu_{ij}$.  The first few
relations that result are shown below:
\begin{align*}
  \Upsilon_{x} &= X_x\, \mu_X + U_x\, \mu_U, \qquad \Psi_{x} =
  X_x\, \nu_X  + U_x\, \nu_U ,\\
  \Upsilon_{u} &= X_u\, \mu_X + U_u\, \mu_U, \qquad
  \Psi_{u} =  X_u\, \nu_X  + U_u\, \nu_U ,\\
  \Upsilon^1_{xx} &= X_x^2\,\mu_{XX} + 2X_x U_x \,\mu_{UX} +U_x^2\,
  \mu_{UU} +
  X_{xx}\, \mu_X + U_{xx}\, \mu_U,\\
  \Upsilon^1_{ux} &= X_xX_u\,\mu_{XX} + (X_x U_u+X_u U_x) \mu_{UX}
  +U_xU_u\,\mu_{UU} +
  X_{ux}\, \mu_X + U_{ux}\, \mu_U,\\
  \Upsilon^1_{uu} &= X_u^2\,\mu_{XX} + 2X_u U_u\, \mu_{UX} +U_u^2 \,\mu_{UU}
  +
  X_{uu} \,\mu_X + U_{uu}\, \mu_U,\\
  &\hskip 0.22cm\vdots
\end{align*}
with the coefficients of the higher-order relations given by the
multi-variate F\`aa-di-Bruno polynomials, \cite{R-1980}.  The precise
coordinate expression of the Maurer--Cartan forms is derived in
\cite{OP-2005}, but these are not necessary for the symbolic
implementation of the moving frame method.

To construct an invariant coframe on $\E\ii$ we note that the space of
differential forms $\bO^*=\bO^*(\mathcal{E}\ii)$ on the infinite-order
lifted bundle decomposes into
$$
\bO^* = \oplus_{k,l} \, \bO^{k,l},
$$
where $k$ indicates the number of jet forms \eqref{eq:Jcoframe} and
$l$ the number of group forms \eqref{eq:Upsilondef}.  Let $\bO^*_J =
\oplus_k \,\bO^{k,0}$ denote the subspace of jet forms, and define the
projection map $\pi_J\colon \bO^* \to \bO^*_J$ onto the jet component.

\begin{definition}
  The \emph{lift} transformation $\bl\colon \bO^*(\Ji) \to \bO^*_J$ is
  defined by
  \begin{equation}
    \label{eq:lift}
    \bl = \pi_J \comp (\bt^{(\infty)})^*.     
  \end{equation}
\end{definition}
\noindent
Thus, by construction, the lift of a jet form on $\J\ii$ is an
invariant jet form defined on $\E\ii$.  
In particular, we have
\[   \omega^x = \bl(dx),\qquad \omega^u = \bl(du) . \]
We also introduce the invariant one-forms
\begin{equation}
  \label{eq:ompdef}
  \begin{aligned}
    \omega^p &= \bl(dp) = \pi_J (dP) \\
    & = (p X_u + X_x)^{-2} \Big\{(X_x U_u -X_u U_x) dp + \\
     &\quad \left.+ \Big(p^2 (U_{ux} X_u - X_{ux} U_u) + p
      (U_{xx}X_u- X_{xx}U_u +U_{ux} X_x- X_{ux}U_x )\right) + \\
     & \quad+(U_{xx}X_x - X_{xx} U_x)\Big) dx\\  &
    \quad\left.+ \Big(p^2 (U_{uu} X_u - X_{uu} U_u) + p (U_{uu}X_x-
      X_{uu}U_x +U_{ux} X_u- X_{ux}U_u )\right) + \\ 
    & \quad+(U_{ux}X_x - X_{ux} U_x)\Big) du\Big\},\\
    \vartheta_{0,0,0} &= \bl(\theta_{0,0,0}) = \frac{X_x U_u - U_x
      X_u}{(p X_u + X_x)^3} \left( dq - q_x dx - q_u du - q_p dp
    \right),
\end{aligned}
\end{equation}
and more generally,
\[ \vartheta_{ijk}=\bl(\theta_{ijk}). \]
Next, we introduce the infinitesimal generator
\begin{equation}\label{infinitesimal generator}
\begin{aligned}
  \vv &= \xi^1(x,u) \pp{}{x} + \xi^2(x,u) \pp{}{u} + \xi^3(x,u,p) \pp{}{p} + \phi(x,u,p,q) \pp{}{q} + \sum_{i+j+k \geq 1} \phi^{ijk} \pp{}{q_{ijk}}\\
  \vv &= \xi(x,u) \pp{}{x} + \eta(x,u)\pp{}{u} + [\eta_x + p(\eta_u - \xi_x) - p^2 \xi_u]\pp{}{p} + [\eta_{xx} + q(\eta_u-2\xi_x) \\
  &\hskip 0.25cm+ p(2 \eta_{xu}-\xi_{xx})-3pq\xi_u +
  p^2(\eta_{uu}-2\xi_{xu})-p^3 \xi_{uu}] \pp{}{q} + \sum_{i+j+k \geq
    1} \phi^{ijk} \pp{}{q_{ijk}}
\end{aligned}
\end{equation} 
of the $\Diff(\R^2)$ action on $\J\ii$ obtained by prolonging
\eqref{eq:Qtarget}.  The coefficients $\phi^{ijk}$ are defined
  recursively by the usual prolongation formula
\begin{equation}\label{prolongation formula}
  \phi^{ijk} = D_x^i D_u^j D_p^k(\phi-\xi^1 q_x - \xi^2 q_u - \xi^3
  q_p) + \xi^1 q_{ij,k+1} + \xi^2 q_{i,j+1,k} + \xi^3
  q_{i,j,k+1},
\end{equation}
where
\begin{gather*}
  D_x = \frac{\partial}{\partial x} + \sum_{i,j,k\geq 0}  q_{{i+1},jk}
  \frac{\partial}{\partial q_{ijk}},\quad
  D_u = \frac{\partial}{\partial u} + \sum_{i,j,k\geq 0} q_{i,j+1,k}
  \frac{\partial}{\partial q_{ijk}},\\
  D_p = \frac{\partial}{\partial p} + \sum_{i,j,k\geq 0} q_{ij,k+1}
  \frac{\partial}{\partial q_{ijk}},
\end{gather*}
are the total derivative operators on $\J\ii$.  

We now extend of the definition of the lift map \eqref{eq:lift} to the vector field jet coordinates $\xi_{ij}$ and $\eta_{ij}$ following \cite[Section 5]{OP-2005}.  For this, let 
\begin{equation}\label{eq:v}
\vv = \xi(x,u)\pp{}{x} + \eta(x,u)\pp{}{u}
\end{equation}
be an infinitesimal generator of $\D = \text{Diff}(\R^2)$.  Then the lift of \eqref{eq:v} is the right-invariant vector field
$$
\bl(\vv) = \sum_{i,j\geq 0} \bigg[\mathbb{D}^i_x\mathbb{D}^j_u\xi(X,U) \pp{}{X_{ij}} + \mathbb{D}^i_x\mathbb{D}^j_u\eta(X,U) \pp{}{U_{ij}}\bigg]
$$
tangent to the source fibers of $\D^{(\infty)}$, where
$$
\mathbb{D}_x = X_x D_X + U_x D_U,\qquad \mathbb{D}_u = X_u D_X + U_u D_U.
$$
Let $\ji\vv \in \rJ^\infty T\mathbb{R}^2$ denotes the infinite jet of \eqref{eq:v}, then the lift of a section $\zeta \in (\rJ^\infty T\mathbb{R}^2)^*$ in the dual bundle to the vector field jet bundle $\rJ^\infty T\mathbb{R}^2$  is defined by the equality
\begin{equation}\label{eq:liftsection}
\langle \bl(\zeta); \bl(\vv)\rangle\big|_{\phi\ii} = \langle \zeta; \ji \vv \rangle\big|_{(X,U)}\qquad \text{whenever}\qquad 
\begin{aligned}
&\phi\ii \in \D\ii, \\ &(X,U) = \bt(\phi).
\end{aligned}
\end{equation}
Since each vector field jet coordinate functions $\xi_{x^iu^j}=\xi_{ij}$, $\eta_{x^iu^j}=\eta_{ij}$ can be viewed as sections of $(\rJ^\infty T\mathbb{R}^2)^*$ we have from \eqref{eq:liftsection} the following defining equalities
\begin{equation}\label{eq:mcforms}
\bl(\xi_{ij})\colon\hskip -0.25cm=\mu_{ij},\qquad
\bl(\eta_{ij})\colon\hskip -0.25cm=\nu_{ij},\qquad i,j \geq 0, 
\end{equation}
where $\mu_{ij}$, $\nu_{ij}$ are the Maurer--Cartan forms introduced in \eqref{eq:mcforms}.  

We also note that the lift of the source variables, called {\it lifted invariants}, gives the
target variables:
\begin{equation}\label{lifted invariants}
X=\bl(x),\qquad U=\bl(u),\qquad P=\bl(p),\qquad Q_{ijk}=\bl(q_{ijk}).
\end{equation}

\begin{proposition}\label{universal recurrence relations proposition}
  The \emph{universal recurrence relations} for the lifted invariants
are
  \begin{subequations}
    \label{eq:unirec}
    \begin{align}
      \label{eq:XUPrecurrence}
      &\begin{aligned}
      &  dX = \omega^x+ \mu,\\ 
      & dU = \omega^u+\nu,\\ 
      & dP = \omega^p+\nu_X+P(\nu_U-\mu_X) - P^2\mu_U,
      \end{aligned}\\
        &\hskip -0.4cm dQ_{ijk} = Q_{i+1,jk}\, \omega^x +
      Q_{i,j+1,k}\,\omega^u +Q_{ij,k+1}\, \omega^p+ 
      \vartheta_{ijk}+\bl(\phi^{ijk}),\label{eq:genQrec}
      \end{align}
\end{subequations}
where $\bl(\phi^{ijk})$ is the lift of the prolonged vector field coefficient \eqref{prolongation formula}.
\end{proposition}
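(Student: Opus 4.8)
\emph{Proof strategy.}\quad The plan is to deduce all four families of relations from a single master formula: for every smooth function $f$ on $\Ji$,
\begin{equation}\label{eq:master}
d\,\bl(f) = \bl(df) + \bl\bpa{\vv(f)},
\end{equation}
where $\vv$ is the prolonged generator \eqref{infinitesimal generator}, the expression $\vv(f)$ is read as linear in the vector-field jet coordinates $\xi_{ij},\eta_{ij}$, and the lift on the last term replaces each such coordinate by the Maurer--Cartan form prescribed in \eqref{eq:mcforms} while sending the source variables $x,u,p,q_{ijk}$ to the lifted invariants $X,U,P,Q_{ijk}$.  Granting \eqref{eq:master}, the stated relations follow by specialization, as carried out in the final paragraph.

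To establish \eqref{eq:master}, I first note that $\pi_J$ acts as the identity on a $0$-form, so by \eqref{eq:lift} one has $\bl(f)=(\bti)^{*}f$; in particular the lifted invariants \eqref{lifted invariants} are exactly the target-map pullbacks of the coordinate functions.  Since pullback commutes with exterior differentiation,
\begin{equation*}
d\,\bl(f) = d\bpa{(\bti)^{*}f} = (\bti)^{*}(df),
\end{equation*}
and, by the bigrading $\bO^{*}=\oplus_{k,l}\bO^{k,l}$, the pure jet part $\pi_J\bpa{(\bti)^{*}df}$ is by definition $\bl(df)$.  Hence
\begin{equation*}
d\,\bl(f) - \bl(df) = \bpa{\mathrm{id}-\pi_J}\bpa{(\bti)^{*}df},
\end{equation*}
the group-form remainder, and it remains to identify this remainder with $\bl\bpa{\vv(f)}$.

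This identification is the crux.  The group-form directions are spanned by the generators $\bl(\vv)$ of the left action $\varphi_L$, the right-invariant fields dual to the Maurer--Cartan forms $\mu_{ij},\nu_{ij}$ via \eqref{eq:liftsection}--\eqref{eq:mcforms}.  Differentiating the target-equivariance \eqref{eq:tinvequiv} along the flow of $\vv$ shows that $\bti$ pushes $\bl(\vv)$ forward onto the prolonged generator $\vv$ on $\Ji$; consequently, contracting $(\bti)^{*}df$ against $\bl(\vv)$ returns $\vv(f)$ pulled back to $\E\ii$.  Expanding this contraction in the Maurer--Cartan basis, and using that each prolonged coefficient is linear in the vector-field jets with polynomial coefficients whose lifts are built from $X,U,P,Q_{ijk}$, reproduces precisely $\bl\bpa{\vv(f)}$, giving \eqref{eq:master}.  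This is the substance of the recurrence calculus of \cite{OP-2005} and is the main obstacle: the surrounding steps are bookkeeping, whereas here one must verify that substituting Maurer--Cartan forms for vector-field jets is compatible with $d$, a compatibility resting on the equivariance \eqref{eq:tinvequiv} and the pairing \eqref{eq:liftsection}.

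Finally I would specialize \eqref{eq:master}.  For $f=x$ one has $\bl(dx)=\omega^x$ and $\vv(x)=\xi$, which lifts to $\mu$, so $dX=\omega^x+\mu$; likewise $dU=\omega^u+\nu$.  For $f=p$ one has $\bl(dp)=\omega^p$ from \eqref{eq:ompdef}, while $\vv(p)=\eta_x+p(\eta_u-\xi_x)-p^2\xi_u$ lifts, under $\eta_x\mapsto\nu_X$, $\eta_u\mapsto\nu_U$, $\xi_x\mapsto\mu_X$, $\xi_u\mapsto\mu_U$ and $p\mapsto P$, to $\nu_X+P(\nu_U-\mu_X)-P^2\mu_U$, which is \eqref{eq:XUPrecurrence}.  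For $f=q_{ijk}$, the contact identity $dq_{ijk}=\theta_{ijk}+q_{i+1,j,k}\,dx+q_{i,j+1,k}\,du+q_{i,j,k+1}\,dp$ coming from \eqref{eq:Jcoframe} lifts to $\bl(dq_{ijk})=\vartheta_{ijk}+Q_{i+1,jk}\,\omega^x+Q_{i,j+1,k}\,\omega^u+Q_{ij,k+1}\,\omega^p$, while $\vv(q_{ijk})=\phi^{ijk}$ from \eqref{prolongation formula} lifts to $\bl(\phi^{ijk})$, yielding \eqref{eq:genQrec}.
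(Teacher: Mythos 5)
Your proof is correct and follows the standard derivation of the universal recurrence relations: the paper itself states Proposition \ref{universal recurrence relations proposition} without proof, importing it from \cite{OP-2005,OP-2008}, and your master formula $d\,\bl(f)=\bl(df)+\bl\bpa{\vv(f)}$ --- obtained from $\bl(f)=(\bti)^*f$, commutation of pullback with $d$, and identification of the group-form remainder via the equivariance \eqref{eq:tinvequiv} and the pairing \eqref{eq:liftsection} --- is precisely the argument of those references. The specializations to $x$, $u$, $p$ and $q_{ijk}$ (using the contact decomposition of $dq_{ijk}$ and the prolongation formula \eqref{prolongation formula}) are carried out correctly and reproduce \eqref{eq:XUPrecurrence} and \eqref{eq:genQrec}.
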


By the prolongation formula \eqref{prolongation formula} and \eqref{infinitesimal generator}, the vector field coefficient $\phi^{ijk}$ is well-defined linear combination of the vector field jet coordinates $\xi_{ij}$, $\eta_{ij}$ with polynomial coefficients in $p$ and $q_{ijk}$.  Thus, by virtue of \eqref{eq:mcforms} and \eqref{lifted invariants} the correction term $\bl(\phi^{ijk})$ in \eqref{eq:genQrec} is a certain linear combination of the Maurer--Cartan forms $\mu_{ij}$, $\nu_{ij}$ whose coefficients depend polynomially on the lifted invariants $P$, $Q_{ijk}$.

Finally, in our analysis we will need to consider the structure equations of the Maurer--Cartan forms \eqref{eq:mcforms}.

\begin{proposition} The \emph{Maurer--Cartan structure equations} of
  the diffeomorphism pseudo-group \eqref{eq:ptxform} are,
  \cite{OPV-2009},
\begin{equation}\label{eq:gendiffmc}
\begin{aligned} 
  d\mu_{ij} &= \hskip-12pt\sum_{\substack{(0,0)\leq (k,\ell)\leq
      (i,j)\\(k,\ell)\neq (i,j)}}
  \binom{i}{k}\binom{j}{\ell}\big(\mu_{k+1,\ell} \wedge
  \mu_{i-k,j-\ell} + \mu_{k,\ell+1} \wedge \nu_{i-k,j-\ell}\big), \\
  &\quad - \mu_{i+1,j} \wedge \omega^x -
  \mu_{i,j+1}\wedge \omega^u,\\
  d \nu_{ij} &= \hskip-12pt\sum_{\substack{(0,0)\leq (k,\ell)\leq
      (i,j)\\(k,\ell)\neq (i,j)}}
  \binom{i}{k}\binom{j}{\ell}\big(\nu_{k+1,\ell} \wedge
  \mu_{i-k,j-\ell} + \nu_{k,\ell+1} \wedge \nu_{i-k,j-\ell}\big)\\
  &\quad - \nu_{i+1,j} \wedge \omega^x - \nu_{i,j+1}\wedge \omega^u .
\end{aligned}
\end{equation}
\end{proposition}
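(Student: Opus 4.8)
The plan is to derive \eqref{eq:gendiffmc} from the implicit Fa\`a di Bruno relations $\Upsilon=\mu$, $\Psi=\nu$ that define the Maurer--Cartan forms, together with the elementary structure equations obeyed by the group forms $\Upsilon_{ij},\Psi_{ij}$ of \eqref{eq:Upsilondef}. First I would record that the group forms themselves satisfy $d\Upsilon_{ij}=-\Upsilon_{i+1,j}\wedge dx-\Upsilon_{i,j+1}\wedge du$ and $d\Psi_{ij}=-\Psi_{i+1,j}\wedge dx-\Psi_{i,j+1}\wedge du$. This is immediate: differentiating $\Upsilon_{ij}=dX_{ij}-X_{i+1,j}\,dx-X_{i,j+1}\,du$ and re-substituting $dX_{i+1,j}=\Upsilon_{i+1,j}+X_{i+2,j}\,dx+X_{i+1,j+1}\,du$ and the analogous expression for $dX_{i,j+1}$, the terms $dx\wedge dx$ and $du\wedge du$ vanish while the two mixed terms carrying $X_{i+1,j+1}$ cancel.

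Next I would transport these equations through the change of coframe from $\{\Upsilon_{ij},\Psi_{ij},dx,du\}$ to $\{\mu_{ij},\nu_{ij},\omega^x,\omega^u\}$. Inverting the defining relations writes each $\mu_{ij}$ as a combination of the group forms $\Upsilon_{k\ell}$ of order $k+\ell\le i+j$, with coefficients the multivariate Fa\`a di Bruno polynomials in the jet coordinates $X_{k\ell},U_{k\ell}$ (and likewise $\nu_{ij}$ from the $\Psi_{k\ell}$); dually $\omega^x=X_x\,dx+X_u\,du$ and $\omega^u=U_x\,dx+U_u\,du$. Exterior-differentiating these expressions, substituting the group-form structure equations above together with $d(dx)=d(du)=0$, and re-expressing the result in the $\mu,\nu,\omega$ basis, every term reorganizes into wedge products of two Maurer--Cartan forms or of one Maurer--Cartan form with $\omega^x$ or $\omega^u$. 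The Leibniz rule applied to the Fa\`a di Bruno coefficients is precisely what produces the binomial weights $\binom{i}{k}\binom{j}{\ell}$ and the paired quadratic terms $\mu_{k+1,\ell}\wedge\mu_{i-k,j-\ell}$ and $\mu_{k,\ell+1}\wedge\nu_{i-k,j-\ell}$ of \eqref{eq:gendiffmc}, while the base-direction contributions assemble into $-\mu_{i+1,j}\wedge\omega^x-\mu_{i,j+1}\wedge\omega^u$.

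The main obstacle is controlling the Fa\`a di Bruno combinatorics uniformly in $(i,j)$. To avoid the index gymnastics I would package the forms into the generating series $\mu[[s,t]]=\sum_{i,j}\mu_{ij}\,s^i t^j/(i!\,j!)$, and similarly for $\nu$, $\omega^x$, $\omega^u$; under this transform the binomial convolutions of \eqref{eq:gendiffmc} become ordinary products of series and the entire infinite system collapses to a single wedge-product identity whose shape mirrors the Lie bracket $[\vv,\ww]=(\xi\,\tilde\xi_x+\eta\,\tilde\xi_u-\tilde\xi\,\xi_x-\tilde\eta\,\xi_u)\,\partial_x+(\cdots)\,\partial_u$ of planar vector fields with components $(\xi,\eta)$ and $(\tilde\xi,\tilde\eta)$, the horizontal shifts $\partial_x,\partial_u$ accounting for the $\omega$ terms. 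Verifying this identity at lowest order and propagating it by the recursion of the first paragraph then gives the general formula. Since the statement is quoted from \cite{OPV-2009}, an equally valid route is to specialize the structure equations established there for $\Diff(\R^m)$ to $m=2$, identifying $\mu,\nu$ with the two dependent-variable labels and $x,u$ with the two independent ones; I would regard the direct derivation above as the self-contained confirmation.
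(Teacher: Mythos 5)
The paper offers no proof of this proposition: the structure equations are simply quoted from \cite{OPV-2009} (see also \cite{OP-2005}), so there is no internal argument to compare yours against. Your outline is nevertheless a faithful reconstruction of how the cited result is actually obtained. The first step is correct and easy to check: differentiating $\Upsilon_{ij}=dX_{ij}-X_{i+1,j}\,dx-X_{i,j+1}\,du$ and back-substituting gives $d\Upsilon_{ij}=-\Upsilon_{i+1,j}\wedge dx-\Upsilon_{i,j+1}\wedge du$ with the two $X_{i+1,j+1}$ cross terms cancelling, and the $(i,j)=(0,0)$ case of \eqref{eq:gendiffmc} then follows directly from $\Upsilon_x=X_x\mu_X+U_x\mu_U$, $\Upsilon_u=X_u\mu_X+U_u\mu_U$, reproducing $d\mu=-\mu_X\wedge\omega^x-\mu_U\wedge\omega^u$ in agreement with \eqref{eq:domegaxup}. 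Your generating-series device is also precisely the mechanism of \cite{OP-2005,OPV-2009}: writing $\mu[[h,k]]=\sum_{i,j}\mu_{ij}h^ik^j/(i!\,j!)$, the two binomial convolutions in \eqref{eq:gendiffmc}, after restoring the excluded $(k,\ell)=(i,j)$ terms and absorbing them into the $\omega$ terms via $dX=\omega^x+\mu$ and $dU=\omega^u+\nu$, assemble into the single identity $d\mu=\partial_h\mu\wedge\mu+\partial_k\mu\wedge\nu-\partial_h\mu\wedge dX-\partial_k\mu\wedge dU$ (and its $\nu$ analogue), which is dual to the Lie bracket of planar vector fields exactly as you indicate. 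The one substantive gap is that the central computation --- showing that the Leibniz rule applied to the inverted Fa\`a di Bruno coefficients really does collapse to this product identity for all $(i,j)$ --- is asserted rather than carried out; that computation is the entire content of the proposition, and executing it (or simply specializing the $\Diff(\R^m)$ structure equations of \cite{OPV-2009} to $m=2$, the route the paper itself implicitly takes) is what would complete the proof.
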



\section{The equivalence problem}

With all the tools in hand, we can now delve into the point equivalence problem of second-order ODEs and prove Theorem \ref{thm:main}.  We begin by setting the equivalence problem within the geometrical framework of the previous section.

\subsection{The direct, infinite-dimensional formulation}
Given a smoothly defined ODE $q=f(x,u,p)$, let $\E_f\n\to N^3$ denote
the bundle over $N^3$ given by the pullback of $\E\n$ by $f\n$.  The
canonical bundle coordinates on $\E_f\n$ are
$x,u,p,X,U,\psi_0^{(n+2)}$, which means that
\[ \dim \E_f\n = 3+(n+3)(n+4). \] The corresponding embedding $\E_f\n
\hookrightarrow \E\n$ is given by
\[ (x,u,p,X,U,\psi_0^{(n+2)}) \mapsto (x,u,p,f\n(x,u,p),X,U,
\psi_0^{(n+2)}).\] It follows that $\E_f\n$ is no longer a groupoid,
but merely a fibre bundle. The corresponding restrictions of the
source and target projections to $\E_f\n$, denoted $\bsn_f \colon
\E_f\n\to N^3$ and $\btn_f\colon\E_f\n \to \J\n$, respectively, are
given by
\begin{align*}
  &\bsn_f \colon (x,u,p,X,U,\psi_0^{(n+2)}) \mapsto (x,u,p),\\
  &\btn_f \colon (x,u,p,X,U,\psi_0^{(n+2)}) \mapsto
  (X,U,\hP(p,\psi_0^{(1)}) , \hQ\n(p,f\n(x,u,p),\psi_0^{(n+2)})).
\end{align*}
Since the mapping \[(p,\psi\n) \mapsto (X,U,\hP(p,\psi_0^{(1)}))\] has
constant rank, the rank of $\btn_f$ is constant if and only if the ODE
is regular as per Definition \ref{def:regular}.  Also, by
\eqref{eq:tinvequiv}, the restricted target map, $\bt\n_f$, is
invariant with respect to point transformations.  Hence, if a given
ODE is classified by $n$th order jets, then the $\img\btn_f\subset
\J\n$ serves as a signature manifold for the ODE; that is, two ODEs are
locally point-equivalent if and only if their signatures overlap on an
open set.

The contact invariant one-forms $\vartheta_{ijk}$ in
\eqref{eq:genQrec} are essential when working with the invariant
variational bicomplex, \cite{KO-2003,TV-2011}, or studying geometric
submanifold flows, \cite{MO-2010,O-2008}.  Since the restriction of
$\E\n$ to $\E_f\n$ annihilates the one-forms $\vartheta_{ijk}$, the
equivalence problem must be formulated in terms of the invariant
$\E\ii_f$-coframe
\begin{equation}
  \label{eq:Efcoframe}
  \omega^x,\qquad
  \omega^u,\qquad \omega^p,\qquad \mu_{ij},\qquad \nu_{ij}.
\end{equation}
The structure equations for
this coframe consist of \eqref{eq:gendiffmc} as well as
\begin{equation}
\label{eq:domegaxup}
\begin{aligned} 
& d\omega^x= -d\mu = \mu_X\wedge\omega^x + \mu_U\wedge
\omega^u,\\
& d\omega^u=-d\nu = \nu_X\wedge\omega^x +
\nu_U\wedge\omega^u,\\
& d\omega^p = (\nu_U-\mu_X - 2P \mu_U)\wedge \omega^p +
(\nu_{UX}+P(\nu_{UU}-\mu_{UX}) -P^2 \mu_{UU})\wedge \omega^u
\\ &\qquad + (\nu_{XX}+P(\nu_{UX}-\mu_{XX}) -P^2
\mu_{UX})\wedge \omega^x. \\ 
\end{aligned}
\end{equation}
The latter are obtained by computing the exterior derivative of
\eqref{eq:XUPrecurrence}, and taking into account the Maurer--Cartan
structure equations \eqref{eq:gendiffmc}.

On $\E\ii_f$, the universal recurrence relations \eqref{eq:genQrec}
must be considered modulo the contact one-forms
$\bvtheta=\{\vartheta_{ijk}\}$.  These relations then express the
one-forms $dQ_{ijk}$ as invariant linear combinations of the coframe
\eqref{eq:Efcoframe}.  So in effect, the present setting can be
considered as an infinite-dimensional overdetermined equivalence
problem; see \cite[p. 297]{O-1995} for a discussion.

\subsection{The universal reduction}\label{section:ureduction}
The direct formulation of the equivalence problem outlined in the
preceding subsection suffers from an essential difficulty stemming
from the fact that the lifted invariants depend on an unbounded number
of pseudo-group variables.  Indeed, there is no upper bound on
$\lrank\btn_f$ as $n\to \infty$, and so, apriori, it is not even
possible to assert that an IC bound exists.

To overcome this difficulty, we introduce a partial moving frame
\cite{O-2011,V-2012} for the action of the point transformation
pseudo-group on $\J\ii$.  In effect, this moves the equivalence
problem from an infinite-dimensional setting to an 8-dimensional
principal bundle. After this \emph{universal} reduction, which is
valid for all smoothly defined ODEs, the invariant classification
proceeds using the usual method of reduction of structure, \cite{K-1989,O-1995}, albeit with
a certain amount of branching.

To illustrate the normalization procedure, we first consider the order
zero normalization in some details, and then pass to the description
of the full normalization.  Considering the recurrence relations
\eqref{eq:XUPrecurrence} and
\begin{align*} dQ &= Q_X \,\omega^x+ Q_U \,\omega^u + Q_P \,\omega^p +
  \bl(\phi)\\
  &= Q_X \,\omega^x+ Q_U \,\omega^u + Q_P \,\omega^p +
  \nu_{XX}+Q(\nu_U-2\mu_X)+P(2\nu_{XU}-\mu_{XX})\\
  &\quad-3PQ\mu_U+P^2(\nu_{UU}-2\mu_{XU})-P^3\mu_{UU},
\end{align*}
we see that it is possible to normalize
\begin{equation}
  \label{eq:0ordnorm} 
  X,U,P,Q\to0
\end{equation}
as their exterior derivatives involve the linearly independent
Maurer--Cartan forms $\mu$, $\nu$, $\nu_X$, $\nu_{XX}$. The result is
the system of equations
$$
0=\omega^x + \mu,\qquad 0=\omega^u + \nu,\qquad 0=\omega^p +
\nu_X,\qquad 0 = Q_P \,\omega^p + Q_U \,\omega^u + Q_X \,\omega^x +
\nu_{XX},
$$
which can be solved for the partially normalized Maurer--Cartan forms:
$$
\mu=-\omega^x,\qquad \nu=-\omega^u,\qquad \nu_X=-\omega^p,\qquad
\nu_{XX} = -(Q_P \,\omega^p + Q_U \,\omega^u + Q_X \,\omega^x).
$$
\begin{remark}\label{remark:gstructure}
After substituting \eqref{eq:0ordnorm} in
\eqref{eq:omxudef} and \eqref{eq:ompdef} and normalizing the
pseudo-group jets, we recover the usual $G$-structure formulation of
the equivalence problem \cite{O-1995},
\begin{equation}\label{G structure}
\begin{pmatrix} \omega^u\\ \omega^p \\ \omega^x
\end{pmatrix} =
\begin{pmatrix} a_1 & 0 & 0\\ a_2 & a_1/a_4 & 0\\ a_3 & 0 & a_4
\end{pmatrix}
\begin{pmatrix} du - p\, dx\\ dp - q\, dx\\ dx
\end{pmatrix},
\end{equation}
where
$$
a_1 = U_u,\qquad a_2 = \frac{\hD_x(U_u)}{\hD_x(X)},\qquad a_3 = X_u,\qquad
a_4 =\hD_x(X) = p X_u + X_x.
$$
\end{remark}

Continuing the normalization procedure, order by order, let
$\Xi\subset \J\ii$ be the submanifold defined by 
\begin{equation}  \label{eq:univnorm}
\begin{aligned}
  &x=u=p=0,\\ 
  &q_{ij0}= q_{ij1}  = 0,\quad i,j \geq 0, \\ 
  &q_{0j2} = q_{1j2} = q_{0j3} = q_{1j3} = 0,\quad j \geq 0,
\end{aligned}
\end{equation}
and let
\[\tE = (\bt^{(\infty)})^{-1}(\Xi)\subset \E\ii\]
denote the lift of $\Xi$; that is, $\tE$ is defined by the equations
\begin{equation}
  \begin{aligned}
    \label{eq:liftedunorms}
    &X=U=P=0,\\
    &Q_{ij0}= Q_{ij1}  = 0,\quad i,j \geq 0, \\
    &Q_{0j2} = Q_{1j2} = Q_{0j3} = Q_{1j3} = 0,\quad j \geq 0.
  \end{aligned}
\end{equation}
Let $H\subset \SL_3\R$ be the 5-dimensional subgroup 
\[ H = \left\{
  \begin{pmatrix}
    a_1 & a_2 & 0\\
    0 & b_2 & 0\\
    c_1 & c_2 & c_3
  \end{pmatrix}: a_1 b_2 c_3 = 1 \right\} \] of fractional linear
transformations that preserve the origin $(x,u)=(0,0)$.  The proof of
the next two Propositions is presented in Appendix
\ref{apdx:norm}. 

\begin{proposition}\label{prop:norm}
The submanifold $\Xi\subset\J\ii$ is a global\footnote{This means that any jet $(x,u,p,q\ii) \in \mathcal{J}\ii$ can be mapped to a point in $\Xi$ under the action of $\text{Diff}(\R^2)$.} cross-section for the action of $\Diff(\R^2)$ on $\J\ii$.
\end{proposition}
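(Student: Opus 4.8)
\emph{Strategy.} To show that $\Xi$ is a global cross-section I must establish two things: first, that every orbit of the prolonged $\Diff(\R^2)$-action on $\J\ii$ meets $\Xi$ (the global property asserted in the footnote), and second, that the normalization equations \eqref{eq:univnorm} determine the pseudo-group jet uniquely, so that $\Xi$ is transverse to the orbits and genuinely defines a partial moving frame. The engine for both is the universal recurrence relations \eqref{eq:unirec}: pulling the defining equations of $\Xi$ back to the lifted bundle gives the equations \eqref{eq:liftedunorms} cutting out $\tE=(\bt\ii)^{-1}(\Xi)$, and differentiating each normalized coordinate converts the cross-section conditions into linear equations for the Maurer--Cartan forms $\mu_{ab},\nu_{ab}$.

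\emph{Order-by-order normalization.} I would proceed inductively on jet order. At order zero the $0$- and $1$-jets of the point transformation \eqref{eq:ptxform} are used to set $X=U=P=Q=0$; from \eqref{eq:XUPrecurrence} together with the recurrence for $dQ$ these normalizations solve for $\mu,\nu,\nu_X,\nu_{XX}$, exactly as in the discussion preceding Remark \ref{remark:gstructure}. For the higher stages, each normalized coordinate $Q_{ijk}=0$ in \eqref{eq:liftedunorms} yields, via \eqref{eq:genQrec} restricted to $\tE$, a \emph{phantom} relation
\[ 0 = Q_{i+1,jk}\,\omega^x + Q_{i,j+1,k}\,\omega^u + Q_{ij,k+1}\,\omega^p + \bl(\phi^{ijk}) \pmod{\bvtheta}, \]
in which the correction term $\bl(\phi^{ijk})$ is the explicit linear combination of Maurer--Cartan forms dictated by the prolongation formula \eqref{prolongation formula}. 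The plan is to exhibit a bijection between the normalized coordinates listed in \eqref{eq:univnorm} and a matching collection of forms $\mu_{ab},\nu_{ab}$, and to verify that the resulting linear system has nonsingular, in fact triangular, coefficient matrix, so that each phantom relation solves for one new Maurer--Cartan form in terms of the invariant horizontal coframe \eqref{eq:Efcoframe}. Solvability of this triangular system is precisely the cross-section criterion of the equivariant moving frame method.

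\emph{Main obstacle.} The delicate point is to justify the \emph{asymmetric} normalization pattern in \eqref{eq:univnorm}: all of $q_{ij0},q_{ij1}$ are normalized, but at the levels $k=2,3$ only the slices $i\in\{0,1\}$ are, and nothing is normalized for $k\geq 4$. The conceptual reason is that the infinitesimal generator \eqref{infinitesimal generator} is polynomial of bounded degree in $p$—degree three in the coefficient $\phi$—and linear in the vector-field jets, so repeated application of $D_p$ in \eqref{prolongation formula} produces no Maurer--Cartan forms of higher order; the form content of $\bl(\phi^{ijk})$ therefore saturates in $k$, which is why only the levels $k\leq 3$, and within them only a bounded range of $i$, carry forms available for normalization. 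I would make this rigorous by organizing the normalized coordinates of \eqref{eq:liftedunorms} and the forms $\mu_{ab},\nu_{ab}$ into matching finite blocks graded by total order, and showing, through the Faà di Bruno structure underlying \eqref{prolongation formula} and \eqref{eq:gendiffmc}, that within each block the phantom system is triangular with nonvanishing diagonal. Verifying that the list \eqref{eq:univnorm} matches the available forms \emph{exactly}—every $\mu_{ab},\nu_{ab}$ gets normalized while no form survives to constrain any $q_{ijk}$ with $k\geq 4$—is the crux of the argument. Granting this accounting, the global conclusion follows without further obstruction: since $\Diff(\R^2)$ is large, each normalization is realized by an explicitly constructed change of variables whose Taylor coefficients are read off by solving the triangular phantom equations in succession, so every orbit meets $\Xi$ and the normalizations pin down the pseudo-group jet uniquely.
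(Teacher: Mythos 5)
Your overall strategy---order-by-order normalization driven by the phantom recurrence relations, with a block-triangular matching between normalized coordinates and Maurer--Cartan forms---is the same engine the paper uses for the transversality half of the argument. But the accounting you identify as ``the crux'' rests on a false premise: it is \emph{not} true that every $\mu_{ab},\nu_{ab}$ gets normalized, and the normalizations do not pin down the pseudo-group jet uniquely. The submanifold $\Xi$ is only a \emph{partially} normalizing cross-section: exactly five Maurer--Cartan forms, namely $\mu_X$, $\mu_U$, $\nu_U$, $\nu_{XU}$, $\nu_{UU}$, survive the reduction, corresponding to the $5$-dimensional residual group $H=\G_\Xi$ of Proposition \ref{prop:Hstab}. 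The correct statement, which the paper establishes through the congruences \eqref{eq:nuxx}, is that $\omega^x,\omega^u,\omega^p,\nu_U,\mu_X,\mu_U,\nu_{XU},\nu_{UU},\vartheta_{ijk}$ form a basis of $T^*\tE$; transversality then follows because the jet forms in this list are independent on $\tE$. If you insist on an exact bijection between the normalized $q_{ijk}$ in \eqref{eq:univnorm} and \emph{all} of the $\mu_{ab},\nu_{ab}$, your count will not close and the argument stalls. A cross-section does not require the stabilizer to be trivial, so this does not threaten the Proposition itself, but it does invalidate the proof as you have framed it.

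The second gap is the global (surjectivity) half. Infinitesimal transversality of the phantom system does not by itself show that every orbit actually reaches $\Xi$; the paper proves this separately by working with the finite transformation laws $\hQ_{ijk}$ rather than with the Maurer--Cartan forms. Using transitivity of $\G$ on $N^3$ to reduce to $p=0$ and restricting to the sub-pseudo-group $X_x=U_u=1$, $U_x=X_u=0$, one checks that $Q_{ij0},Q_{ij1},Q_{ij2},Q_{ij3}$ are, to leading order in the top-order diffeomorphism jets, \emph{affine} (e.g.\ $Q_{ij0}\equiv q_{ij0}+U_{i+2,j}$), so the equations $Q_{ijk}=0$ for the relevant index set can be solved explicitly, order by order, for the Taylor coefficients of the diffeomorphism, with $U_{11}$ and $U_{02}$ left over as free variables---again reflecting the residual group $H$. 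Your closing sentence gestures at this (``Taylor coefficients are read off by solving the triangular phantom equations''), but the phantom equations are statements about one-forms, not about the group elements themselves; the finite, affine-to-leading-order computation is what actually has to be carried out, and it is not a formal consequence of the infinitesimal one.
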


\begin{proposition}
  \label{prop:Hstab}
  We have $H = \G_\Xi$; that is, $H$ is the subgroup of $\SL_3\R$ that
  preserves $\Xi$.
\end{proposition}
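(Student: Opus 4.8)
The plan is to pass to the infinitesimal level and identify the Lie algebra of $\G_\Xi$. Since $\G$ is the prolongation of $\Diff(\R^2)$, its Lie algebra is spanned by the prolonged vector fields \eqref{infinitesimal generator}, and the subgroup preserving the submanifold $\Xi$ has Lie algebra $\mathrm{Lie}(\G_\Xi)=\{\vv : \vv \text{ is tangent to } \Xi\}$. Geometrically one expects the answer to be the isotropy algebra of the projective action of $\SL_3\R$ on the space of contact elements $N^3=\rJ^1(\R,\R)$ that fixes the origin element $x=u=p=0$, namely the infinitesimal fractional-linear transformations fixing the point $(0,0)$ together with the horizontal direction $p=0$. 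I would therefore compute the tangency algebra explicitly, show it is $5$-dimensional and equal to this projective isotropy algebra, and then integrate.

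First I would extract the low-order tangency constraints. Requiring that \eqref{infinitesimal generator} be tangent to the normalized base coordinates $x,u,p$ in \eqref{eq:univnorm} forces $\xi(0,0)=\eta(0,0)=0$ together with $\eta_x(0,0)=0$ from the vanishing of the $p$-component $\eta_x+p(\eta_u-\xi_x)-p^2\xi_u$ at $p=0$; these are exactly the conditions that the underlying point transformation fix the origin and preserve the direction $p=0$. The remaining tangency requirements are $\phi^{ijk}\big|_\Xi=0$ for every multi-index occurring in \eqref{eq:univnorm}, with $\phi^{ijk}$ given by the prolongation formula \eqref{prolongation formula}. Since $\Xi$ is coordinatized by the non-normalized jet variables, each such identity must hold identically in the free coordinates, and this converts it into a finite collection of linear equations on the Taylor coefficients $\xi_{ij},\eta_{ij}$ of $\xi$ and $\eta$ at the origin. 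The five projective generators $x\,\partial_x,\ u\,\partial_x,\ u\,\partial_u,\ x^2\partial_x+xu\,\partial_u,\ xu\,\partial_x+u^2\partial_u$ solve this system, as a direct computation with \eqref{prolongation formula} confirms, so they span a $5$-dimensional subalgebra $\mathfrak h\subseteq\mathrm{Lie}(\G_\Xi)$.

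The reverse inclusion is the crux and the main obstacle. Here I would argue by induction on the jet order that the equations $\phi^{ijk}\big|_\Xi=0$ for the normalized indices successively determine every higher Taylor coefficient of $\xi$ and $\eta$ in terms of the five parameters above, leaving no additional freedom; concretely, one must show that the normalization \eqref{eq:univnorm} is complete, in the sense that the conditions imposed on $\{q_{ij0},q_{ij1},q_{0j2},q_{1j2},q_{0j3},q_{1j3}\}$ annihilate precisely the non-projective Taylor modes of $(\xi,\eta)$ while consuming none of the genuine invariant freedom carried by the free coordinates. The bookkeeping for this step is the delicate part, and it is driven by the explicit prolongation formula \eqref{prolongation formula} together with the linear structure already exhibited in the $G$-structure of Remark \ref{remark:gstructure}. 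Granting $\mathrm{Lie}(\G_\Xi)=\mathfrak h$, I would conclude at the group level by exponentiation: writing a fractional-linear map that fixes $[0:0:1]$ and preserves the line $u=0$ in homogeneous coordinates yields exactly the matrix pattern defining $H$, with the unimodular normalization producing the constraint $\det = a_1 b_2 c_3 = 1$, and a check that $\G_\Xi$ contributes no extra connected components completes the identification $\G_\Xi=H$.
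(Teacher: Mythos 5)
Your overall strategy is sound and is in fact the infinitesimal mirror of the paper's argument: both reduce the claim to a dimension count, checking that the stabilizer contains the $5$-dimensional projective isotropy algebra of the contact element $(0,0,0)$ and then showing nothing else survives. Your identification of the five generators $x\,\partial_x,\ u\,\partial_x,\ u\,\partial_u,\ x^2\partial_x+xu\,\partial_u,\ xu\,\partial_x+u^2\partial_u$ is correct, and exponentiating them does produce the matrix pattern defining $H$.

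The problem is that you explicitly defer the crux. The inclusion $\mathrm{Lie}(\G_\Xi)\subseteq\mathfrak h$ is exactly the content of the proposition, and ``I would argue by induction on the jet order \ldots the bookkeeping for this step is the delicate part'' followed by ``Granting $\mathrm{Lie}(\G_\Xi)=\mathfrak h$\ldots'' leaves that content unproven. The claim does not fail --- the induction you describe can be carried out --- but as written your proof establishes only $\mathfrak h\subseteq\mathrm{Lie}(\G_\Xi)$ plus the (easy) exponentiation. What you are missing is that the required bookkeeping has, in effect, already been done in the proof of Proposition \ref{prop:norm}: the triangular system of congruences for the prolonged coefficients $\phi^{ijk}$ at $p=0$ (equivalently, equations \eqref{eq:nuxx} for the Maurer--Cartan forms) shows that the $\phi^{ijk}$ with normalized indices span, modulo lower-order terms, \emph{all} of the Taylor coefficients $\xi_{ij},\eta_{ij}$ except for the five combinations dual to $\mu_X,\mu_U,\nu_U,\nu_{XU},\nu_{UU}$. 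Imposing $\phi^{ijk}|_\Xi=0$ for those indices therefore kills every Taylor mode outside a $5$-dimensional space, giving $\dim\mathrm{Lie}(\G_\Xi)\le 5$ at once; combined with your lower bound this closes the argument by dimensional exhaustion, which is precisely how the paper concludes (there phrased as: $\nu_U,\mu_X,\mu_U,\nu_{XU},\nu_{UU}$ form a basis of one-forms for the source fibres of $\E_\Xi\to\Xi$). Without invoking that computation, or actually performing the order-by-order elimination you sketch, the proof is incomplete.
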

\noindent 

In light of Proposition \ref{prop:norm}, we can utilize $\Xi$ as a
partially normalizing cross-section for the point-equivalence problem, \cite{O-2011}.
Given a smoothly defined ODE $q=f(x,u,p)$, let $\tbs_f\colon\tE_f \to
N^3$ denote the pullback bundle of $\tbs\colon\tE\to \J\ii$ via $f\ii \colon
N^3 \to \J\ii$.  By Proposition \ref{prop:Hstab}, this pullback is a
reduction of structure from $\bs_f\colon \E\ii_f \to \J\ii$ to the
principal $H$-bundle, $\tbs_f\colon\tE_f\to N^3$.

In the sequel, we use the tilde decoration to denote the pullback to
$\tE_f$, and refer to the quantities
\begin{equation*}
  \tQ_{ijk} = \hQ_{ijk}(p,q^{(i+j+k)},\psi^{(i+j+k+2)}_0)\Big|_{\tE_f}
\end{equation*}
as \emph{universal invariants}.  The universal invariants are, in
fact, the components of $\tbt_f\colon\tE_f \to \Xi$; the latter
obtained by imposing the normalizations \eqref{eq:liftedunorms}.  As
such, the universal invariants are functions of $3+5=8$ variables and
are $H$-equivariant with respect to the restricted left action
\eqref{eq:phindef}.

As is shown in Appendix \ref{apdx:norm}, the following one-forms
\[ \mu,\qquad \nu,\qquad\nu_X, \qquad \nu_{XX}, \qquad \mu_{ij},\; i+j\geq
2,\qquad \nu_{ij},\; i+j \geq 3, \] are normalized by
\eqref{eq:liftedunorms}.  In particular, applying
\eqref{eq:liftedunorms} to the universal recurrence formulas for
\[ dQ_{P^k}, \qquad dQ_{P^kX},\qquad dQ_{P^kU}, \qquad k=0,1,2,3,\]
yields following relations:
\begin{equation}
  \label{eq:1formnorm}
  \begin{aligned}
    &\tmu = -\tomega^x,\qquad \tnu = -\tomega^u,\qquad \tnu_X =
    -\tomega^p,\\
    &\tnu_{X^2} = \tnu_{X^3} = \tnu_{X^2U} = \tmu_{X^4} = 0,\qquad
    \tmu_{X^2} = 2 \tnu_{XU},\\
    &\tmu_{XU}=\frac{1}{2} \tnu_{UU},\qquad \tmu_{U^2} = \frac{1}{6}
    \tQ_{P^4} \tomega^p,\qquad 
    3\tmu_{X^2U} = 6 \tnu_{XU^2} = \tQ_{P^2X^2}\, \tomega^x,\\
    &6\tmu_{XU^2} = 3 \tnu_{U^3} = \tQ_{P^3X^2}\, \tomega^x+\tQ_{XP^4}\,
    \tomega^p,\qquad
    6\tmu_{U^3} = - \tQ_{P^4}\,\tnu_{XU} + \tQ_{P^4U}\, \tomega^p.
  \end{aligned}
\end{equation}
The expressions \eqref{eq:1formnorm} and subsequent formulas were obtained by importing the universal recurrence relations \eqref{eq:XUPrecurrence} into {\sc Mathematica}.  Substituting the above relations into \eqref{eq:gendiffmc} and \eqref{eq:domegaxup} yields the structure equations for $\tE_f$,
namely:

\begin{align}
&d\tomega^x = \tmu_X \wedge \tomega^x + \tmu_U \wedge \tomega^u,\nonumber\\
&d\tomega^u = \tnu_U \wedge \tomega^u + \tomega^x \wedge \tomega^p,\nonumber\\
&d\tomega^p = \tnu_{UX} \wedge \tomega^u + (\tnu_U - \tmu_X) \wedge
\tomega^p,\nonumber\\
&d\tmu_X = -2\,\tnu_{UX} \wedge \tomega^x - \frac{1}{2} \tnu_{UU}\wedge
\tomega^u - \tmu_U \wedge \tomega^p,\label{eq:uredseq}\\
&d\tmu_U = -\frac{1}{2}\tnu_{UU}\wedge \tomega^x + (\tmu_X -
\tnu_U)\wedge \tmu_U + \frac{1}{6} \tQ_{P^4} \tomega^u \wedge \tomega^p,\nonumber\\
&d\tnu_U = -\tnu_{UX} \wedge \tomega^x + \tmu_U \wedge \tomega^p -
\tnu_{UU}\wedge \tomega^u,\nonumber\\
&d\tnu_{UU} = 2 \tnu_{UX} \wedge \tmu_U + \tnu_{UU}\wedge \tnu_U +
\frac{1}{3} \tQ_{P^4X}\, \tomega^u\wedge \tomega^p +
\frac{1}{3}\tQ_{P^3X^2}\,\tomega^u \wedge \tomega^x,\nonumber\\
&d\tnu_{UX} = \tnu_{UX} \wedge \tmu_X - \frac{1}{2} \tnu_{UU} \wedge
\tomega^p + \frac{1}{6}\tQ_{P^2X^2}\, \tomega^u \wedge \tomega^x.\nonumber
\end{align}

We now  define the
\emph{reduced  rank sequence}
\begin{equation}
  \label{eq:trhodef}
  \tvarrho_n \colon\hskip-0.25cm= \lrank \tbt_f\n,\quad n\geq 0.
\end{equation}
\begin{proposition}
  \label{prop:3reg}
  The following are equivalent:
  \begin{itemize}
  \item[(i)] $q=f(x,u,p)$ is a regular ODE;
  \item[(ii)] for every $n$, the reduced rank  $\tvarrho_n$ is constant;
  \item[(iii)] the invariant coframe
    \begin{equation}\label{eq:8invcoframe}
    \tomega^x,\qquad \tomega^u,\qquad \tomega^p,\qquad \tmu_X,\qquad \tmu_U,\qquad \tnu_U,\qquad \tnu_{XU},\qquad \tnu_{UU} 
    \end{equation}
    on $\tE_f$ is fully regular \cite[Definition
    8.14]{O-1995}.
  \end{itemize}
\end{proposition}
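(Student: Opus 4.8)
The plan is to exploit the reduction of structure that has already been established: the universal reduction has transformed the infinite-dimensional lifted bundle $\E_f\ii$ into the finite, $8$-dimensional principal $H$-bundle $\tE_f$, and the candidate coframe \eqref{eq:8invcoframe} lives on this bundle. The strategy is therefore to argue in a cycle (i) $\Rightarrow$ (ii) $\Rightarrow$ (iii) $\Rightarrow$ (i), translating the \emph{rank}-based notions of regularity into the \emph{coframe}-based notion of full regularity and back.

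First I would prove (i) $\Leftrightarrow$ (ii). Regularity as in Definition \ref{def:regular} concerns the constancy of $\lrank\{Q_{ijk}\}$ on $\E_f\n$, while $\tvarrho_n$ records $\lrank\tbt_f\n$ on the reduced bundle $\tE_f$. Since $\tE_f$ is obtained from $\E_f\ii$ by the fixed algebraic normalizations \eqref{eq:liftedunorms}, which cut out a principal $H$-subbundle, the target map $\btn_f$ and its restriction $\tbt_f\n$ differ only by the constant-rank embedding coming from the $H$-action; I would argue that the two rank functions therefore differ by a fixed constant (the dimension contributed by the normalized pseudo-group directions), so one is constant in $n$-by-$n$ fashion if and only if the other is. Here I would invoke the remark already made in the excerpt, that the map $(p,\psi\n)\mapsto(X,U,\hP(p,\psi_0^{(1)}))$ has constant rank, to handle the bookkeeping cleanly.

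Next I would prove (ii) $\Leftrightarrow$ (iii). The key observation is that the eight one-forms in \eqref{eq:8invcoframe} are precisely the invariant one-forms whose lifted structure equations \eqref{eq:uredseq} were just derived; by the universal recurrence relations \eqref{eq:genQrec} restricted to $\tE_f$, the differentials $d\tQ_{ijk}$ of the universal invariants expand as invariant linear combinations of this coframe with coefficients that are polynomial in the universal invariants. Consequently the rank of $\tbt_f\n$ at a point equals the rank of the collection of differentials $\{d\tQ_{ijk}: i+j+k\le n\}$ expressed in the coframe, which is exactly the data controlling O'Ver's notion of full regularity \cite[Definition 8.14]{O-1995}: the order-by-order constancy of $\tvarrho_n$ is the statement that the successive ranks of these differentials stabilize as locally constant functions. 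I would make this correspondence precise by identifying $\tvarrho_n$ with the number of functionally independent universal invariants of order $\le n$ and noting that full regularity of the coframe is the requirement that these counts be locally constant in $n$.

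The main obstacle I expect is the careful accounting in the first equivalence: one must verify that passing from the over-determined, infinite-dimensional formulation on $\E_f\ii$ to the reduced $H$-bundle $\tE_f$ does not alter the \emph{constancy} of the rank, only its numerical value, and that this holds uniformly in $n$. This requires knowing that the normalizations \eqref{eq:liftedunorms} used to build $\tE_f$ are themselves regular (i.e.\ can be solved for the Maurer--Cartan forms listed after Proposition \ref{prop:Hstab} without introducing rank drops), which is exactly what is established in Appendix \ref{apdx:norm}; I would cite that computation rather than redo it. Once the rank transfers faithfully, the equivalence with full regularity of the explicit eight-form coframe \eqref{eq:8invcoframe} is a direct translation of definitions, and (iii) $\Rightarrow$ (i) closes the cycle by running the same identification in reverse.
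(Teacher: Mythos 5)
Your plan follows the same route as the paper's (very terse) proof: the equivalence of (i) and (ii) is obtained from the invariance/equivariance of the target projection under the reduction to the $H$-bundle $\tE_f$, and the equivalence of (ii) and (iii) is read off from the structure equations \eqref{eq:uredseq} together with the universal recurrence relations \eqref{eq:genQrec}. Your version simply spells out the rank bookkeeping that the paper leaves implicit, so it is correct and essentially identical in approach.
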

\begin{proof}
  The equivalence of (i) and (ii) follows from the equivariance
  property \eqref{eq:tinvequiv} of the target projection.  The
  equivalence of (ii) and (iii) follows from the structure equations
  \eqref{eq:uredseq} and from the universal recurrence relations
  \eqref{eq:genQrec}.
\end{proof}
\noindent In light of the above remarks and Proposition
\ref{prop:3reg} we observe that
\[ \tvarrho_0= \tvarrho_1=\tvarrho_2=\tvarrho_3=0,\quad\text{and}\quad
\tvarrho_4\leq \tvarrho_5\leq \tvarrho_6 \leq \ldots \leq 8 .\] It
follows that the reduced rank sequence stabilizes at a sufficiently
high order $n$.  Indeed, the following is true.
\begin{proposition}
  The IC order can be characterized as the smallest integer $n\geq 4$
  such that $\tvarrho_{n-1} = \tvarrho_n$.
\end{proposition}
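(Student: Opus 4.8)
The strategy is to read the IC order off the reduced rank sequence by combining two ingredients: a \emph{stabilization lemma}, to the effect that $\tvarrho_n$ becomes constant as soon as two consecutive terms coincide, together with the moving-frame equivalence criterion, which identifies the IC order with the order at which the signature $\img\tbt_f$ is completely determined. Throughout I use that $f$ is regular, so that by Proposition \ref{prop:3reg} the ranks $\tvarrho_n$ are well defined and the invariant coframe \eqref{eq:8invcoframe} is fully regular --- exactly the hypothesis under which Cartan's solution of the equivalence problem applies. Since $\tvarrho_n$ is nondecreasing and bounded above by $\dim\tE_f=8$, a first coincidence $\tvarrho_{n-1}=\tvarrho_n$ must occur; let $n^\star\geq4$ denote the least index at which it does. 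The aim is to prove that the IC order equals $n^\star$.

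The technical core is the stabilization lemma: for $n\geq4$, the equality $\tvarrho_{n-1}=\tvarrho_n$ forces $\tvarrho_{n+1}=\tvarrho_n$. Restricting the recurrence relations \eqref{eq:genQrec} to $\tE_f$ annihilates the contact forms $\vartheta_{ijk}$, and after the normalizations \eqref{eq:1formnorm} every surviving Maurer--Cartan form is expressed through the coframe \eqref{eq:8invcoframe}. A direct inspection of the resulting reduced recurrences shows that, for $i+j+k\geq4$, the $\tomega^x$-, $\tomega^u$- and $\tomega^p$-components of $d\tQ_{ijk}$ recover $\tQ_{i+1,jk}$, $\tQ_{i,j+1,k}$ and $\tQ_{ij,k+1}$ modulo universal invariants of order at most $i+j+k$. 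Thus every order-$(n+1)$ invariant appears, up to invariants of order $\leq n$, as an $\tomega$-component of the differential of an order-$n$ invariant. Now fix a maximal functionally independent set $J_1,\dots,J_r$ among the invariants of order $\leq n-1$, which exists because $\tvarrho_{n-1}=r$; the hypothesis $\tvarrho_n=r$ then forces every invariant of order $\leq n$ to be a function of the $J_a$. Since each $J_a$ has order $\leq n-1$, the $\tomega$-components of $dJ_a$ are invariants of order $\leq n$, hence functions of the $J_a$. Writing any order-$n$ invariant as $\Phi(J_1,\dots,J_r)$, its differential $d\Phi=\sum_a(\partial\Phi/\partial J_a)\,dJ_a$ then has $\tomega$-components that are functions of the $J_a$; comparing with the preceding observation, every order-$(n+1)$ invariant is a function of $J_1,\dots,J_r$. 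Hence $\tvarrho_{n+1}=r$, and by induction $\tvarrho_m=r$ for all $m\geq n-1$. In particular $n^\star$ is the genuine stabilization point, the full independent set already being present at order $n^\star-1$.

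With the lemma in hand I would establish the two inequalities. For the upper bound, IC order $\leq n^\star$, the key is that the reduced recurrence is \emph{universal} --- identical for every ODE --- so the manner in which each invariant of order $>n^\star$ is generated by invariant differentiation from the invariants of order $\leq n^\star$ is the same for $f$ and for any target $F$. Consequently, if $F$ satisfies the order-$n^\star$ algebraic consistency of Definition \ref{def:ICorder} with $f$, so that their order-$n^\star$ universal invariants agree, then agreement propagates order by order to all universal invariants; the full signatures $\img\tbt_f$ and $\img\tbt_F$ overlap, and the moving-frame equivalence criterion yields a point transformation carrying $f$ to $F$. For the lower bound, IC order $\geq n^\star$, consider first $n^\star=4$: every universal invariant of order $\leq3$ is normalized to zero by \eqref{eq:liftedunorms}, so order-$3$ consistency holds between $f$ and \emph{every} regular ODE and therefore cannot single out the equivalence class, giving IC order $\geq4$. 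When $n^\star\geq5$, minimality of $n^\star$ yields $\tvarrho_{n^\star-2}<\tvarrho_{n^\star-1}$, so the order-$(n^\star-1)$ signature does not yet record the order-$n^\star$ \emph{dependency relation} expressing the (now functionally dependent) order-$n^\star$ invariants through $J_1,\dots,J_r$; this essential classifying relation is genuine order-$n^\star$ data. Realizing, via the generalized Cartan integration theorem \cite{BCGGG-1991}, a regular ODE $F$ with the same order-$(n^\star-1)$ invariants but a different order-$n^\star$ dependency relation produces an $F$ that meets the order-$(n^\star-1)$ consistency yet has non-overlapping full signature, so $F\not\sim f$. In either case $(n^\star-1)$-jets fail to classify $f$, and therefore the IC order is exactly $n^\star$.

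I expect the main obstacle to be the lower bound when $n^\star\geq5$, and specifically the realization step: one must show that the order-$n^\star$ dependency relation can be varied independently of the order-$(n^\star-1)$ data and still be realized by a bona fide regular second-order ODE, so as actually to produce an inequivalent competitor $F$ agreeing with $f$ to order $n^\star-1$. This is where the full regularity of the coframe (Proposition \ref{prop:3reg}) and Cartan's existence theorem are indispensable. A secondary but genuine difficulty, underlying both inequalities, is the precise correspondence between the algebraic consistency of Definition \ref{def:ICorder} and the overlap of signature manifolds: one must match not merely the pointwise values of the universal invariants but also the induced invariant coframes carried by the signatures, so that the invariant differentiation used in the propagation argument for the upper bound is legitimate.
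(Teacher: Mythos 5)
The paper does not actually write out a proof of this proposition: its entire argument is the citation ``See Proposition 8.18 of \cite{O-1995}'', i.e., it appeals to the general theory of fully regular coframes (the rank-stabilization result and the accompanying equivalence theorem, Theorems 8.19--8.22 of \cite{O-1995}), whose hypotheses are supplied here by Proposition \ref{prop:3reg}. What you have done is reconstruct that general theory in situ from the recurrence relations \eqref{eq:genQrec} and the normalizations \eqref{eq:1formnorm}: your stabilization lemma is precisely Olver's Proposition 8.18 specialized to the coframe \eqref{eq:8invcoframe}, with the derived invariants of the coframe identified with the universal invariants of one higher order, and your upper-bound propagation argument is the standard proof that overlapping classifying manifolds of order $n^\star$ imply equivalence. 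This is the same mathematics by a self-contained route, and it buys transparency: it makes visible exactly where regularity (Proposition \ref{prop:3reg}) and the structure of the correction terms $\bl(\phi^{ijk})$ enter, which the paper leaves implicit behind the citation. One bookkeeping caution: after the universal normalization, the $\tomega$-coefficients of $d\tQ_{ijk}$ pick up contributions not only from $\tQ_{i+1,jk}$ etc.\ but also from the normalized Maurer--Cartan forms in \eqref{eq:1formnorm}, whose coefficients are themselves universal invariants of order up to $i+j+k+2$; the stabilization argument survives because these extra terms are again functions of invariants already accounted for at the stabilized rank, but your phrase ``modulo universal invariants of order at most $i+j+k$'' should be weakened accordingly.

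The one place where your argument is genuinely incomplete --- and you correctly flag it --- is the lower bound for $n^\star\geq 5$: producing a regular competitor $F$ that is order-$(n^\star-1)$ consistent with $f$ yet inequivalent requires showing that the order-$n^\star$ dependency relation can be varied while freezing the lower-order data, and your appeal to the generalized Cartan integration theorem is asserted rather than executed (one must in particular verify involutivity of the relevant tableau for the perturbed data). The paper sidesteps this by leaning on the cited characterization and by establishing sharpness only where it matters for Theorem \ref{thm:main}, namely through the explicit realization of the maximal branch via Theorem \ref{Cartan theorem} and the integration in Section \ref{sect:integration}. So your proposal is a faithful, essentially correct expansion of the paper's one-line proof, with the realization step of the lower bound left as a genuine, acknowledged gap.
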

\begin{proof}
  See Proposition 8.18 of \cite{O-1995}.
\end{proof}

\begin{remark}\label{remark:estructure}
As alluded in Remark \ref{remark:gstructure}, the equivariant moving frame formalism offers an alternative approach to the $G$-structure formulation of the equivalence problem.  The 8-dimensional coframe \eqref{eq:8invcoframe} and the structure equations \eqref{eq:uredseq} obtained after carrying out the universal normalizations \eqref{eq:liftedunorms} can also be found using Cartan's equivalence algorithm, \cite{C-1955,K-1989,NS-2003,O-1995}.   At this stage, the equivalence problem splits into different branches according to the values of $\tQ_{P^4}$ and $\tQ_{P^2X^2}$.  As advocated by Gardner, \cite{G-1983, O-1995}, the different scenarios could be analyzed symbolically using the structure equations \eqref{eq:uredseq} and the identity $d^2=0$ for the exterior derivative.  But as mentioned in \cite{O-1995}, one has to be careful as one might be led down spurious branches of the equivalence problem owing to unexpected normalizations or cancellations due to the explicit forms of the coframe.  In the equivariant formalism we dispense ourself from these computations and issues by exploiting the recurrence relations \eqref{eq:genQrec}.

Another benefit of the moving frame formalism is the possibility of determining the order of an invariant without knowing its coordinate expression which is not something that can be easily done within Cartan's framework.  According to \cite[Lemma 7.4]{OP-2009}, once the pseudo-group action becomes free at order $n$, in other word all the pseudo-group parameters of the $n$th prolonged action can be normalized, then the normalization of a lifted invariant $Q_{ijk}$ of order $i+j+k \leq n$ is an invariant of order $i+j+k$.
\end{remark} 

\subsection{The fundamental branching}
\label{sect:branching}
The universal reduction \eqref{eq:liftedunorms} leads to the
normalization of all the Maurer--Cartan forms \eqref{eq:mcforms}
except for
\begin{equation}\label{unnormalized mc forms}
\tmu_X,\qquad \tmu_U,\qquad \tnu_U,\qquad \tnu_{UU},\qquad \tnu_{XU}.
\end{equation}
To proceed further, the value of the universal invariants 
\begin{equation}\label{intermediate remaining invariants}
  \tQ_{P^{k+4} U^j X^i},\qquad \tQ_{P^3 U^j X^{i+2}},\qquad \tQ_{P^2 U^j X^{i+2}},\qquad i,j,k\geq 0,
\end{equation}
(recall that these are restrictions of the lifted invariants $Q_{ijk}$
to the universal normalizing cross-section \eqref{eq:liftedunorms})
must be analyzed in more details order by order.  Up to order 6, the non-trivial
universal invariants \eqref{intermediate remaining invariants} are
\begin{equation}\label{456 invariants}
\begin{aligned}
n=4\colon&\qquad \tQ_{P^4},\; \tQ_{P^2 X^2},\\
n=5\colon&\qquad \tQ_{P^5},\; \tQ_{P^4 U},\; \tQ_{P^4 X},\; \tQ_{P^3 X^2},\; \tQ_{P^2 U X^2},\; \tQ_{P^2 X^3},\\
n=6\colon&\qquad \tQ_{P^6},\; \tQ_{P^5 U},\; \tQ_{P^5 X},\; \tQ_{P^4 U^2},\; \tQ_{P^4 U X},\; \tQ_{P^4 X^2},\; \tQ_{P^3 U X^2},\\
&\qquad \tQ_{P^3 X^3},\; \tQ_{P^2 U^2 X^2},\; \tQ_{P^2 U X^3},\; \tQ_{P^2 X^4}.
\end{aligned}
\end{equation}
Writing the recurrence relations for the invariants \eqref{456
  invariants} of order $\leq 5$ we obtain
\begin{align}
  d\tQ_{P^4} &= \tQ_{P^5} \,\tomega^p + \tQ_{P^4U} \,\tomega^u +
  \tQ_{P^4X}\,\tomega^x + \tQ_{P^4}(2 \tmu_X - 3 \tnu_U),\nonumber\\
  d\tQ_{P^2X^2} &= \tQ_{P^3 X^2}\,\tomega^p + \tQ_{P^2 U X^2}
  \,\tomega^u + \tQ_{P^2 X^3} \,\tomega^x - \tQ_{P^2X^2}(\tnu_U + 2
  \tmu_X),\nonumber\\
  d\tQ_{P^5} &= \tQ_{P^6} \,\tomega^p + \tQ_{P^5U} \,\tomega^u +
  \tQ_{P^5X}\,\tomega^x + 5 \tQ_{P^4} \,\tmu_U + \tQ_{P^5} (3 \tmu_X -
  4 \tnu_U),\nonumber\\ 
  d\tQ_{P^4X} &= (\tQ_{P^5 X} + \tQ_{P^4 U})\,\tomega^p + \tQ_{P^4 U
    X} \,\tomega^u + \tQ_{P^4 X^2}\,\tomega^x + \tQ_{P^4}\, \tnu_{UX}
  \nonumber\\   &\quad + \tQ_{P^4 X}(\tmu_X - 3 \tnu_U),\nonumber\\ 
  d\tQ_{P^4U} &= \tQ_{P^5 U} \,\tomega^p + \tQ_{P^4 U^2} \,\tomega^u +
  \tQ_{P^4 U X}\,\tomega^x - 2 \tQ_{P^4}\, \tnu_{UU} - \tQ_{P^5}\,
  \tnu_{UX} - \tQ_{P^4 X}\,\tmu_U\nonumber\\ 
  &\quad +\tQ_{P^4 U}(2 \tmu_X - 4 \tnu_U),\label{45 recurrence
    relations}\\ 
  d\tQ_{P^3X^2} &= \tQ_{P^4X^2}\, \tomega^p + \tQ_{P^3 U X^2}\,
  \tomega^u + (\tQ_{P^3 X^3} -2 \tQ_{P^2 U X^2}) \, \tomega^x -
  \tQ_{P^2X^2}\, \tmu_U \nonumber\\
  &\quad - \tQ_{P^3X^2}(2\tnu_U + \tmu_X),\nonumber\\
  d\tQ_{P^2 U X^2} &= \tQ_{P^3 U X^2}\, \tomega^p + \tQ_{P^2 U^2
    X^2}\, \tomega^u + \tQ_{P^2 U X^3}\, \tomega^x - 2 \tQ_{P^2X^2}\,
  \tnu_{UU} - \tQ_{P^3X^2}\, \tnu_{UX} \nonumber\\ 
  &\quad - \tQ_{P^2X^3} \,\tmu_U - 2 \tQ_{P^2 U X^2}(\tnu_U + \tmu_X),\nonumber\\
  d\tQ_{P^2X^3} &= (\tQ_{P^3 X^3}- \tQ_{P^2UX^2})\,\tomega^p +
  \tQ_{P^2 U X^3}\, \tomega^u + \tQ_{P^2 X^4}\, \tomega^x - 5
  \tQ_{P^2X^2}\,\tnu_{UX} \nonumber\\ 
  &\quad - \tQ_{P^2X^3}(\tnu_U + 3 \tmu_X).\nonumber
\end{align}
Considering the first two recurrence relations in \eqref{45 recurrence relations}, and concentrating on the correction terms involving the 
partially normalized Maurer--Cartan forms $\tmu_X$, $\tnu_U$, we notice that the values of $\tQ_{P^4}$ and $\tQ_{P^2X^2}$ will govern
the next possible normalizations.  For example, if  $\tQ_{P^4} \equiv \tQ_{P^2X^2} \equiv 0$ then the correction terms vanish and 
 $\tQ_{P^4}$, $\tQ_{P^2X^2}$ are genuine invariants that cannot be normalized. In this case, higher order universal invariants have
to be considered in order to normalize the remaining pseudo-group parameters.   In total, there are 4 different cases splitting the 
equivalence problem into 4 branches:
%
\vskip 0.25cm
\begin{tabular}{cc}
{\bf I)} $\tQ_{P^4}\equiv 0$ and $\tQ_{P^2X^2} \equiv 0$, &\hskip 1cm {\bf III)} $\tQ_{P^4}\equiv 0$ and $\tQ_{P^2X^2} \not\equiv 0$,\\
{\bf II)} $\tQ_{P^4}\not\equiv 0$ and $\tQ_{P^2X^2} \equiv 0$, &\hskip 1cm {\bf IV)} $\tQ_{P^4}\not\equiv 0$ and $\tQ_{P^2X^2}\not\equiv 0$.
\end{tabular}
\vskip 0.25cm
Branch {\bf I} corresponds to the equivalence class of linearizable
differential equations discussed in the introduction.  For this class
of equations we have $\tvarrho_4=0$, and so the IC order equals $4$.

As for case {\bf IV}, we see from the recurrence relations \eqref{45
  recurrence relations} that the Maurer--Cartan forms
\eqref{unnormalized mc forms} can be normalized by setting 
$$
\tQ_{P^4},\tQ_{P^2X^2}\to1,\qquad \tQ_{P^5},\tQ_{P^4U},\tQ_{P^4X}\to0.
$$
This uses up all of the remaining $H$-freedom and produces a genuine
moving frame.  The algebra of absolute differential invariants is then
generated by the remaining invariants \eqref{456 invariants} of order
5 and 6.  The reduced ranks are
\[ \tvarrho_4 =2, \qquad \tvarrho_5 \geq 5 .\] Hence, the ``worst-case
scenario'', as far as the IC order is concerned, is
\[ (\tvarrho_5,\tvarrho_6,\tvarrho_7,\tvarrho_8,\tvarrho_9) =
(5,6,7,8,8);\] and as a consequence, the highest IC order achievable
is 9.  This bound will be attained if, post-normalization, the
remaining 5th order invariants are constant, and the signature
manifold is parametrized by three invariants of order $6,7,8$,
respectively, with the higher order invariants obtained by
differentiating the order 6 invariant.  We do not push the analysis
further as we will show that cases {\bf II} and {\bf III} contain
equations with invariant classification order equal to 10.  Since our
goal is to find the branch(es) with highest classification order, we
will focus on those branches.  Indeed, in the sequel we consider case
{\bf III} in detail, as branches {\bf II} and {\bf III} are dual to
each other.  This duality was first observed by Cartan in his study of
projective connections, \cite{C-1955}.  For completeness, the duality
among second-order ordinary differential equations is presented in
Appendix \ref{duality appendix}.

\subsection{Case III}
\label{sect:case3}
From now on, we assume that $\tQ_{P^2X^2}\not\equiv 0$ and
$\tQ_{P^4}\equiv 0$.  With these assumptions, we will show that the
reduced rank sequence obeys
\[ \tvarrho_4 = 1,\qquad \tvarrho_5 =4,\qquad \tvarrho_6 \geq 5.\]
Hence, for this class of ODEs there exists a genuine moving frame
formulated in term of 6th order jets.  For class \textbf{III}
equations, the ``worst case scenario'' is the rank sequence
\[ (\tvarrho_4,\tvarrho_5,\tvarrho_6,\tvarrho_7,\tvarrho_8,\tvarrho_9,\tvarrho_{10}) =
(1,4,5,6,7,8,8) ;\]
which makes an IC order of 10 a possibility.

By Proposition \ref{prop:QP2X4-QP3X3}, below, the class \textbf{III}
has two branches, which we label \textbf{III.1}
and \textbf{III.2}.  For sub-case \textbf{III.1} we will show that
$\tvarrho_6=5$ implies $\tvarrho_7=5$, which means that the IC order
is 7.  The other possibility is that $\tvarrho_6\geq 6$, but this
means that the IC order is $\leq 9$.  Hence, sub-case \textbf{III.1}
can be ruled out.

Finally, for sub-case \textbf{III.2} we will show that there is
essentially one type of configuration of invariant values that gives the
rank sequence $(\tvarrho_4,\tvarrho_5,\tvarrho_6,\tvarrho_7,\tvarrho_8,\tvarrho_9,\tvarrho_{10}) =(1,4,5,6,7,8,8)$.  We will derive this
configuration, and in the subsequent section integrate the
corresponding structure equations.

Under the non-degeneracy assumption
$\tQ_{P^2X^2}\not\equiv 0$ it is possible to normalize
\begin{equation}\label{eq:QP2X2norm}
\tQ_{P^2X^2}\to1,\qquad \tQ_{P^3X^2},\tQ_{P^2UX^2},\tQ_{P^2X^3}\to0,
\end{equation}
which consequently normalizes the Maurer--Cartan forms $\tnu_U$,
$\tnu_U$, $\tnu_{UU}$, $\tnu_{UX}$ to certain linear combinations of
$\tomega^x,\tomega^u,\tomega^p,\tmu_X$.  Henceforth, to avoid
confusion, we use the ``check'' $\ckQ_{ijk}$ decoration to indicate
the invariants and one-forms obtained via additional normalization of
the universal invariants as per \eqref{eq:QP2X2norm}.  Furthermore,
the recurrence relation for $\tQ_{P^4}\equiv 0$, forces the following
fifth-order invariants:
\begin{equation}\label{QP4=0 5 order consequences}
\ckQ_{P^5}\equiv \ckQ_{P^4U} \equiv \ckQ_{P^4X}\equiv 0
\end{equation}
to be identically equal to zero.  Combining \eqref{eq:QP2X2norm} and \eqref{QP4=0 5 order consequences} we conclude that
all universal 5th order invariants can be normalized to a constant, and
that to normalize the remaining Maurer--Cartan form $\ckmu_X$ we must
consider universal invariants of order 6.  At order 6, the constraints
\eqref{QP4=0 5 order consequences} force the invariants
\begin{equation}\label{zero 6th order invariants}
\ckQ_{P^6}\equiv \ckQ_{P^5 U} \equiv \ckQ_{P^5 X} \equiv \ckQ_{P^4 U^2} \equiv \ckQ_{P^4 UX} \equiv \ckQ_{P^4 X^2} \equiv 0
\end{equation}
to be identically zero, and more generally,
\begin{equation}\label{zero invariants}
\ckQ_{P^{4+i}U^j X^k}\equiv 0,\qquad i,j,k \geq 0.
\end{equation}
Hence from \eqref{zero 6th order invariants} we conclude that the
remaining non-constant universal invariants of order 6 are
$$
\ckQ_{P^3 U X^2},\qquad \ckQ_{P^3 X^3},\qquad \ckQ_{P^2 U^2 X^2},\qquad
\ckQ_{P^2 U X^3},\qquad \ckQ_{P^2 X^4}.
$$

\begin{proposition}\label{prop:QP2X4-QP3X3}
  The invariants $\ckQ_{P^2X^4}$ and $\ckQ_{P^3X^3}$ cannot
  simultaneously be equal to zero.
\end{proposition}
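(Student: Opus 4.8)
The plan is to derive a contradiction from the assumption that $\ckQ_{P^2X^4}\equiv \ckQ_{P^3X^3}\equiv 0$ by exploiting the identity $d^2=0$ applied to the one-forms already pinned down by the normalization \eqref{eq:QP2X2norm}. After the case \textbf{III} normalization $\tQ_{P^2X^2}\to1$ together with $\tQ_{P^3X^2},\tQ_{P^2UX^2},\tQ_{P^2X^3}\to0$, the forms $\tnu_U,\tnu_{UU},\tnu_{XU}$ become specific linear combinations of $\tomega^x,\tomega^u,\tomega^p,\ckmu_X$, determined by solving the relevant recurrence relations in \eqref{45 recurrence relations}. First I would record these solved expressions explicitly and substitute them back into the structure equations \eqref{eq:uredseq}, thereby obtaining the \emph{reduced} structure equations for the check-coframe $\ckomega^x,\ckomega^u,\ckomega^p,\ckmu_X$. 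The remaining sixth-order invariants $\ckQ_{P^3UX^2},\ckQ_{P^3X^3},\ckQ_{P^2U^2X^2},\ckQ_{P^2UX^3},\ckQ_{P^2X^4}$ then appear as the torsion coefficients of these reduced equations.

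Next I would compute $d(d\ckomega^p)=0$ (and, if needed, $d(d\tnu_{XU})=0$, where $\tnu_{XU}$ is now expressed via the normalized invariants). Because $\tQ_{P^2X^2}\to1$, the recurrence relation $d\tQ_{P^2X^2}$ from \eqref{45 recurrence relations}, after normalization, becomes an identity $0=\text{(lifted correction terms)}$, and differentiating it — equivalently, closing the structure equation for $\tnu_{XU}$ in \eqref{eq:uredseq} whose torsion is $\tfrac16\tQ_{P^2X^2}\tomega^u\wedge\tomega^x$ — produces wedge-product relations among the sixth-order torsion coefficients. The crucial point is that the coefficient of $\ckomega^u\wedge\ckomega^x\wedge\ckomega^p$ (or a similarly distinguished triple) in the identity $d^2=0$ should, after imposing $\ckQ_{P^2X^4}=\ckQ_{P^3X^3}=0$, reduce to a nonzero constant (arising from the normalized value $\tQ_{P^2X^2}=1$) equated to zero. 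This numerical inconsistency is the contradiction.

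The main obstacle will be bookkeeping: after the cascade of normalizations \eqref{eq:QP2X2norm}, solving for $\tnu_{UU}$ and $\tnu_{XU}$ introduces $\ckmu_X$-dependent terms, and the Jacobi/$d^2=0$ computation mixes the already-vanishing invariants \eqref{zero 6th order invariants} with the five surviving sixth-order invariants. I would organize the calculation by working modulo the ideal generated by $\ckomega^x,\ckomega^u,\ckomega^p$ wherever a $\ckmu_X$-component can be isolated, so that only the genuine three-form torsion survives, and by using \eqref{zero invariants} to kill all $\ckQ_{P^{4+i}U^jX^k}$ terms at the outset. Care is needed because a naive computation may produce a relation that is automatically satisfied (a spurious identity) rather than a constraint; the payoff comes specifically from the term whose coefficient traces back to the normalized unit $\tQ_{P^2X^2}=1$ in the torsion of $d\tnu_{XU}$ in \eqref{eq:uredseq}.

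Concretely, closing $d\tnu_{XU}$ yields a three-form whose $\ckomega^u\wedge\ckomega^x\wedge\ckomega^p$ coefficient is a linear expression in $\ckQ_{P^2X^4}$, $\ckQ_{P^3X^3}$, and a constant; setting both invariants to zero leaves this constant, forcing $1=0$. Hence $\ckQ_{P^2X^4}$ and $\ckQ_{P^3X^3}$ cannot both vanish, which is the assertion of the Proposition.
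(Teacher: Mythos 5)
Your strategy is, at bottom, the same argument the paper makes, only phrased in the dual $G$-structure language: the syzygies among normalized invariants that the paper extracts from the recurrence relations \eqref{45 recurrence relations} are precisely the $d^2=0$ integrability conditions of the reduced coframe that you propose to close (the paper's Remark \ref{remark:estructure} makes this equivalence explicit, and indeed argues that the recurrence-relation route avoids exactly the bookkeeping you flag as the main obstacle). You also correctly locate the ultimate source of the contradiction in the normalization $\tQ_{P^2X^2}\to 1$, which is where the paper's constant $5/6$ comes from.

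However, as written the proposal has a concrete gap: the decisive numerical fact is asserted (``should \ldots reduce to a nonzero constant'') rather than established, and the claimed shape of the final relation is too optimistic. Closing a single structure equation does not produce a relation in $\ckQ_{P^2X^4}$, $\ckQ_{P^3X^3}$ and a constant alone; seventh-order invariants such as $\ckQ_{P^3X^4}$, $\ckQ_{P^2UX^3}$ and $\ckQ_{P^2UX^4}$ necessarily appear and must be eliminated first. The paper's proof requires two \emph{chained} compatibility conditions: first, the cross-derivative identity $D_X(\ckQ_{P^3X^3})+\tfrac{9}{4}\ckQ_{P^2UX^3} = \ckQ_{P^3X^4} = D_P(\ckQ_{P^2X^4})$ forces $\ckQ_{P^2UX^3}\equiv 0$ under the hypothesis; only then does the second syzygy, $D_X(\ckQ_{P^2UX^3})+\tfrac{5}{6}+\tfrac{6}{5}\ckQ_{P^3X^3}\ckQ_{P^2X^4} = \ckQ_{P^2UX^4} = D_U(\ckQ_{P^2X^4})$, collapse to the contradiction $0=5/6$. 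Without exhibiting this chain --- or an equivalent explicit computation of the closed structure equations showing which torsion coefficient survives --- the argument is not complete: that the constant is nonzero \emph{is} the content of the Proposition, and it must be computed, not presumed. A further caution: your plan of working modulo the ideal generated by $\ckomega^x,\ckomega^u,\ckomega^p$ to isolate $\ckmu_X$-components would discard exactly the $\ckomega^u\wedge\ckomega^x\wedge\ckomega^p$ term you need, so the reduction has to be organized the other way around, by first using the $\ckmu_X$-equivariance (weights) of the invariants and then reading off the pure three-form torsion.
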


\begin{proof}
Considering the recurrence relations for $\ckQ_{P^3X^3}$, $\ckQ_{P^2UX^3}$, $\ckQ_{P^2 X^4}$ we have
\begin{align*}
  d \ckQ_{P^3X^3} &= \frac{3}{4} \ckQ_{P^3UX^2} \,\ckomega^p +
  \bigg(\ckQ_{P^3 U X^3} - \frac{9}{4} \ckQ_{P^2 U^2 X^2} \bigg)
  \ckomega ^u + \bigg( \ckQ_{P^3 X^4} - \frac{9}{4} \ckQ_{P^2 U
    X^3}\bigg) \ckomega^x \\
  &\quad + 2 \ckQ_{P^3X^3}\, \ckmu_X,\\
  d \ckQ_{P^2 U X^2} &= \bigg( \ckQ_{P^3UX^3} - \ckQ_{P^2U^2X^2} -
  \frac{1}{5}\ckQ_{P^3X^3}^2 \bigg)\ckomega^p + \bigg(
  \ckQ_{P^2U^2X^3}
  - \ckQ_{P^3UX^2}\,\ckQ_{P^2X^4} \\
  &\quad - \frac{1}{5}\ckQ_{P^3X^3}\,\ckQ_{P^2UX^3}\bigg)\ckomega^u + \bigg(
  \ckQ_{P^2 UX^4} - \frac{5}{6} -
  \frac{6}{5}\ckQ_{P^3X^3}\,\ckQ_{P^2X^4}\bigg)\ckomega^x +
  \ckQ_{P^2UX^3}\,\ckmu_X,\\
  d \ckQ_{P^2 X^4} &= \ckQ_{P^3X^4}\,\ckomega^p +
  \ckQ_{P^2UX^4}\,\ckomega^u + \ckQ_{P^2X^5}\,\ckomega^x - 2
  \ckQ_{P^2X^4}\,\ckmu_X.
\end{align*}
Assuming $\ckQ_{P^2X^4}\equiv \ckQ_{P^3X^3}\equiv 0$, the syzygy
$$
D_X(\ckQ_{P^3X^3})+\frac{9}{4}\ckQ_{P^2UX^3} = \ckQ_{P^3 X^4} = D_P(\ckQ_{P^2X^4})
$$
forces $\ckQ_{P^2UX^3}\equiv 0$, which when combined with the syzygy
$$
D_X(\ckQ_{P^2UX^3})+\frac{5}{6} + \frac{6}{5} \ckQ_{P^3X^3} \ckQ_{P^2X^4} = \ckQ_{P^2UX^4} = D_U(\ckQ_{P^2X^4}),
$$
leads to the contradiction $0=5/6$.
\end{proof}

By virtue of Proposition \ref{prop:QP2X4-QP3X3}, two sub-cases must be
considered: \vskip 0.25cm
\begin{tabular}{cc}
{\bf III.1)}  $\ckQ_{P^3X^3} \not\equiv 0$, & \hskip 1cm
{\bf III.2)}  $\ckQ_{P^2X^4} \not\equiv 0$.
\end{tabular}
 
\subsubsection{Sub-case III.1}

Assuming $\ckQ_{P^3X^3} \not\equiv 0$, we can set 
$$
\ckQ_{P^3X^3}\to1
$$
and normalize the Maurer--Cartan form $\ckmu_X$.  After normalization,
the remaining sixth-order invariants
\begin{equation}\label{6 genuine invariants sub-case 1-case 3}
\ckQ_{P^3UX^2},\qquad \ckQ_{P^2 U^2 X^2},\qquad \ckQ_{P^2 U X^3},\qquad \ckQ_{P^2 X^4},
\end{equation}
are genuine invariants in the sense that they do not depend on
pseudo-group parameters.  In an attempt to minimize the rank, we
assume that the functions \eqref{6 genuine invariants sub-case 1-case
  3} are constant:
\begin{equation}\label{constant assumption-case 3}
\ckQ_{P^3 U X^2} \equiv C_1,\qquad \ckQ_{P^2 U^2 X^2} \equiv C_2,\qquad \ckQ_{P^2U X^3}\equiv C_3,\qquad \ckQ_{P^2 X^4}\equiv C_4.
\end{equation}
Combining \eqref{zero 6th order invariants} with \eqref{constant
  assumption-case 3}, it follows from a careful analysis of the
recurrence relations that all seventh-order invariants are constant
which in turns forces all higher-order invariants to also be constant.
On the other hand, if the invariants \eqref{6 genuine invariants
  sub-case 1-case 3} are not constant, the IC order is $\leq 9$.

\subsubsection{Sub-case III.2}
 
We now assume that $\ckQ_{P^2X^4} \not\equiv 0$, and set
\begin{equation}
  \label{eq:QP2X4norm}
  \ckQ_{P^2X^4}\to1
\end{equation}
to normalize $\ckmu_X$ and obtain a genuine moving frame.  From now
on, for the sake of notational convenience, we omit writing the check
decoration and simply use $Q_{ijk}$ to denote the absolute
differential invariants obtained by normalizing the universal
invariants using \eqref{eq:QP2X2norm} and \eqref{eq:QP2X4norm}.
Likewise, the invariant coframe on $N^3$ will be written simply as
$\omega^x, \omega^u, \omega^p$ and the dual derivative operators as
$D_X, D_U, D_P$.

At order 6, we are left with the absolute differential invariants
\begin{equation}\label{remaining order 6 invariants}
Q_{P^3UX^2},\qquad Q_{P^3X^3},\qquad Q_{P^2U^2X^2},\qquad Q_{P^2UX^3}.
\end{equation}
Once more, in an attempt to minimize the rank, we assume that the
invariants \eqref{remaining order 6 invariants} are constant:
\begin{equation}\label{constant order 6 invariants}
Q_{P^3UX^2}\equiv C_1,\qquad Q_{P^3X^3} \equiv C_2,\qquad Q_{P^2U^2X^2}\equiv C_3,\qquad Q_{P^2UX^3}\equiv C_4.
\end{equation}
The seventh-order invariant $Q_{P^2X^5}$ plays an important role in
the following considerations.  To single out this invariant, let us set
\[
I_7=Q_{P^2X^5}.
\]
Considering the recurrence relations of the invariants \eqref{constant
  order 6 invariants} we obtain a collection of constraints on the
seventh-order invariants:
\begin{align}
  0=dC_1 &= \frac{5 C_1Q_{P^3X^4}}{2} \,\omega^p + \bigg(
  Q_{P^3U^2X^2} - C_1C_2 + \frac{5C_1
    Q_{P^2UX^4}}{2}\bigg)\omega^u\nonumber\\ 
  &\quad + \bigg( Q_{P^3 UX^3} - C_2^2 - 2C_3 + \frac{5
    I_7C_1}{2}\bigg)\omega^x,\nonumber\\
  0=dC_2 &= \bigg(C_2Q_{P^3X^4}+\frac{3C_1}{4}\bigg)\omega^p+\bigg(Q_{P^3UX^3} + C_2Q_{P^2UX^4}-\frac{9C_3}{4}\bigg)\omega^u\nonumber\\
  &\quad +\bigg(Q_{P^3X^4}+I_7C_2-\frac{9C_4}{4}\bigg)\omega^x,\label{case
    3 order 6 recurrence relations}\\
  0=dC_3 &= \bigg( Q_{P^3U^2X^2} +2 C_3 Q_{P^3X^4}-\frac{2
    C_1C_2}{5} \bigg)\omega^p + \bigg( Q_{P^2U^3X^2} + 2
  C_3Q_{P^2UX^4}\nonumber\\
  &\quad -\frac{12 C_1C_4}{5} \bigg) \omega^u  + \bigg( Q_{P^2U^2X^3}
  + 2I_7C_3 -2C_2C_4 - \frac{2C_1}{5}\bigg)\omega^x,\nonumber\\ 
  0=dC_4 &= \bigg( Q_{P^3UX^3}  - C_3 + \frac{C_4Q_{P^3X^4}}{2}
  - \frac{C_2^2}{5}\bigg)\omega^p + \bigg( Q_{P^2U^2X^3} - C_1 +
  \frac{C_4Q_{P^2UX^4}}{2} \nonumber\\ 
  &\quad  - \frac{C_2C_4}{5} \bigg)\omega^u + \bigg( Q_{P^2UX^4} +
  \frac{I_7 C_4}{2} - \frac{6C_2}{5} - \frac{5}{6}\bigg)
  \omega^x.\nonumber
\end{align}
If the invariant $I_7$ is constant, \eqref{zero invariants} and
\eqref{case 3 order 6 recurrence relations} imply that all
seventh-order invariants are constant. Similarly, all higher order
invariants are constant.  On the other hand, when $I_7$ is a
non-constant invariant, the constraints \eqref{case 3 order 6
  recurrence relations} yield
$$
C_1=C_2=C_3=C_4=0.
$$
Which in turn, implies, together with \eqref{zero invariants}, that all seventh-order invariants are identically equal to zero except for
$$
Q_{P^2X^5}=I_7\qquad\text{and}\qquad Q_{P^2 U X^4}=\frac{5}{6}.
$$
Taking the exterior derivative of $I_7$ we obtain
\begin{equation}\label{dI}
dI_7 = I_8 \,\omega^x - \frac{5}{4} I_7 \,\omega^u + \frac{5}{6} \,\omega^p,
\end{equation}
where $I_8=D_X(I_7)$ is the only new (functionally independent)
invariant of order 8.  Then, differentiating $I_8$ with respect to $D_X$
we find the only new invariant of order $9$:
$$
I_9=D_X(I_8)=D_X^2(I_7).
$$
Generically, the invariants $I_7, I_8, I_9$, are functionally
independent, and the structure of the invariant signature manifold is
completely determined by the functional relation
\[
I_{10}=D_X(I_9)=\phi(I_7,I_8,I_9).
\]
Modulo duality, all branches of the equivalence problem have now been
considered, and we can safely conclude that $10$ is an upper bound on
the IC order.  

\section{The maximal IC order class}
\subsection{Abstract existence}
To terminate the proof of Theorem \ref{thm:main}, we
must show that there exists a class of differential equations
satisfying the invariant constraints imposed in sub-case {\bf III.2}.
To do so, we need the structure equations of the invariant one-forms
$\omega^x$, $\omega^u$, $\omega^p$.

These equations are obtained symbolically by substituting the
Maurer--Cartan form normalizations
\begin{gather*}
  \mu=-\omega^x,\qquad \nu=-\omega^u,\qquad \nu_X=-\omega^p,\\
  -2\mu_X = \nu_U = -I_7 \,\omega^x -\frac{5}{6}\,\omega^u,\qquad
  \mu_U = \nu_{XX}= 0,\qquad \nu_{UX} = \frac{1}{5}\omega^x,
\end{gather*}
obtained by solving the recurrence relations for the phantom invariants,  into the structure equations \eqref{eq:domegaxup}.  The result is
\begin{equation}\label{structure equations-case3}
\begin{aligned}
  d\omega^x &= \frac{5}{12} \,\omega^u \wedge \,\omega^x,\\
  d\omega^u &= \omega^x \wedge \,\omega^p + I_7 \,\omega^u \wedge \,\omega^x,\\
  d\omega^p &= \frac{3}{2} I_7 \,\omega^p \wedge \,\omega^x +
  \frac{5}{4} \,\omega^p \wedge \,\omega^u + \frac{1}{5} \,\omega^x
  \wedge \,\omega^u.
\end{aligned}
\end{equation}

If the invariant $I_7$ were constant, we could apply Cartan's
Integration Theorem, \cite{BCGGG-1991}, to conclude the existence of
differential equations solving the integration problem.  But since
this is not the case we must use the following generalization of
Cartan's integration theorem \cite{B-2011}.
\begin{theorem}\label{Cartan theorem}
  Let $\,\omega^1,\ldots,\,\omega^\ell$ be a coframe with structure
  equations
  $$
  d\omega^i= \sum_{1\leq j < k \leq \ell} C^i_{jk}(I^a)\,\,\omega^j \wedge
  \,\omega^k,
  $$
  such that the structure coefficients are functions of $I^a, \;
  a=1,\ldots, s$ and
  $$
  dI^a= \sum_{i=1}^\ell \bigg(F^a_i(I^b) + \sum_{\alpha=1}^r A^a_{i
    \alpha}(I^b)J^\alpha\bigg)\,\omega^i,\qquad a,b=1,\ldots,s.
  $$
  Assuming 
  \begin{itemize}
  \item the identity $d^2=0$ holds,
  \item the functions $C^i_{jk}$, $F^a_i$, $A^a_{i \alpha}$ are real analytic,
  \item the tableau $A(I^b)=(A^a_{i\alpha}(I^b))$ has rank $r$ and is
    involutive with Cartan characters $s_1 \geq s_2 \geq \cdots \geq
    s_q > s_{q+1} =0$ for all values of $(I^b)$,
  \end{itemize}
  modulo a diffeomorphism, the general real-solution exists and
  depends on $s_q$ functions of $q$ variables.  Moreover, $(I^a)$ and
  $(J^\alpha)$ can be arbitrarily specified at a point.
\end{theorem}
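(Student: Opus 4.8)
The plan is to recast the abstract structure equations as an exterior differential system and then to apply the real-analytic Cartan--Kähler theorem, \cite{BCGGG-1991}. Following the standard realization setup for the equivalence method, I would build a differential ideal $\mathcal{I}$ whose $\ell$-dimensional integral manifolds, transverse to the independence condition $\omega^1\wedge\cdots\wedge\omega^\ell\neq 0$, are precisely the coframes $(\omega^i)$ carrying invariants $(I^a)$ that satisfy the stated structure equations. Concretely, $\mathcal{I}$ is generated algebraically by the two-forms
\[ \Theta^i = d\omega^i - \sum_{j<k} C^i_{jk}(I)\,\omega^j\wedge\omega^k, \qquad \Xi^a = dI^a - \sum_{i}\Big(F^a_i(I) + \sum_{\alpha} A^a_{i\alpha}(I)\,J^\alpha\Big)\omega^i, \]
where the free quantities $J^\alpha$ play the role of fiber parameters that, on any integral manifold, become the determined $\omega^i$-components of $dI^a$ not already fixed by the $F^a_i$.

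The heart of the argument is to certify that $\mathcal{I}$ is torsion-free and involutive. Reducing $d\Theta^i$ and $d\Xi^a$ modulo $\mathcal{I}$ produces three-forms whose semibasic parts are exactly the quantities that the hypothesis $d^2=0$ forces to vanish; thus \emph{torsion absorption} is equivalent to differential closure of the ideal, and the compatibility relations among $C^i_{jk}$, $F^a_i$ and $A^a_{i\alpha}$ demanded by $d^2=0$ guarantee that an integral element exists over every point. Once the torsion is absorbed, the admissible integral elements are parametrized linearly by the next-order free data, and the governing tableau is precisely $A(I^b)=(A^a_{i\alpha}(I^b))$. The hypotheses that $A$ has constant rank $r$ and is involutive with Cartan characters $s_1\geq\cdots\geq s_q>s_{q+1}=0$ are then exactly Cartan's test of involution for $\mathcal{I}$, so no further prolongation is needed.

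With involutivity and the real-analyticity assumption in force, the Cartan--Kähler theorem yields, through each point and each chosen integral element, a real-analytic $\ell$-dimensional integral manifold; projecting onto the $(\omega^i, I^a)$ data produces the desired coframe, and the realization is unique up to a local diffeomorphism, which is the meaning of the phrase ``modulo a diffeomorphism.'' The Cauchy data of the construction permit prescribing the values of $(I^a)$ and $(J^\alpha)$ arbitrarily at a point, and the Cartan--Kähler generality count identifies the general integral manifold as depending on $s_q$ functions of $q$ variables, as claimed. The step I expect to be the main obstacle is the torsion-and-tableau bookkeeping: one must verify that every torsion term generated by $d\Theta^i$ and $d\Xi^a$ is genuinely absorbed by the $d^2=0$ identity and does not secretly impose a hidden first-order constraint, and that the prolongation tableau of $\mathcal{I}$ coincides with $A$ so that its Cartan characters---and hence the final generality---transfer without correction.
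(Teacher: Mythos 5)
The paper does not actually prove Theorem \ref{Cartan theorem}: it is quoted from Bryant's presentation \cite{B-2011} and used as a black box, so there is no internal proof to measure your argument against. That said, your outline does follow the route by which this generalization of Cartan's third theorem is established in the literature --- recast the structure equations as an exterior differential system, absorb the torsion using the $d^{2}=0$ hypothesis, identify the tableau of the resulting linear Pfaffian system with $A(I^{b})$, and invoke the real-analytic Cartan--K\"ahler theorem \cite{BCGGG-1991} to obtain existence together with the generality count of $s_{q}$ functions of $q$ variables. The strategy is the right one.

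As written, however, the proposal is a plan rather than a proof, and the step you yourself flag as ``the main obstacle'' is precisely the substantive content of the theorem. Three points are asserted but not carried out. First, the ambient manifold is never specified: the $\omega^{i}$ must live somewhere before one can impose $\Theta^{i}=0$, so one has to work on something like $\mathbb{R}^{\ell}\times GL(\ell,\mathbb{R})\times\mathbb{R}^{s}\times\mathbb{R}^{r}$ with tautological forms, and the $J^{\alpha}$ must be genuine coordinates whose differentials enter the integral-element analysis --- it is exactly the terms $A^{a}_{i\alpha}\,dJ^{\alpha}\wedge\omega^{i}$ appearing in $d\Xi^{a}$ that produce the tableau, so they cannot be treated as passive fiber parameters. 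Second, the claim that $d^{2}=0$ absorbs all torsion and that the tableau of the system coincides with $A$, with no hidden first-order constraints and no need for prolongation, is the nontrivial verification; Cartan's test must actually be run, and the constant-rank and involutivity hypotheses on $A$ are consumed precisely there, as is the transfer of the characters $s_{1}\geq\cdots\geq s_{q}$ to the final generality statement. Third, and symptomatic of the level of care: $\Xi^{a}$ is a one-form, not a two-form, and $d\Xi^{a}$ is a two-form, not a three-form, so the ideal is a linear Pfaffian system with an independence condition rather than one generated purely by two-forms. None of these is a wrong turn, but until they are executed the argument is an announcement of Bryant's proof rather than a proof.
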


Applying Theorem \ref{Cartan theorem} to \eqref{dI} and
\eqref{structure equations-case3} we conclude that there exists a
family of second-order ordinary differential equations depending on
one function of one variable that solves the integration problem.
Hence, by duality we conclude the there are two families of equations,
both depending on one function of one variable, that achieve the
maximal classification order of Theorem \ref{thm:main}.


\subsection{Explicit integration}
\label{sect:integration}

In this section we integrate the structure equations \eqref{structure
  equations-case3} to obtain an explicit representation of one of the
two families the differential equations satisfying the maximal
classification order of Theorem \ref{thm:main}.

\begin{proposition}
  Let $(\alpha, \beta, \gamma)$ be a local coordinate system on
  $\mathbb{R}^3\setminus \{\alpha =0\}$, then the one-forms
  \begin{equation}\label{integrated one-forms}
    \omega^x = \alpha\, d\gamma,\qquad 
    \omega^u = \frac{12}{5} \frac{d\alpha}{\alpha} - \beta\, d\gamma,\qquad
    \omega^p = \frac{d\beta}{\alpha} + \frac{12 I_7}{5}
    \frac{d\alpha}{\alpha} + \Gamma(\alpha, \beta, \gamma)\, d\gamma, 
  \end{equation}
  where $\Gamma(\alpha,\beta, \gamma)$ is a solution of the of linear
  partial differential equations
  \begin{equation}\label{PDE for Gamma}
    \Gamma_\alpha + \frac{3}{\alpha}\Gamma = \frac{12 I_{7,\gamma}}{5
      \alpha} + \frac{18 I_7^2}{5} - \frac{3 I_7
      \beta}{\alpha}-\frac{12}{25},\qquad
    \Gamma_\beta=\frac{3I_7}{2}-\frac{5 \beta}{4\alpha}, 
  \end{equation}
  satisfy the structure equations \eqref{structure equations-case3}.  
\end{proposition}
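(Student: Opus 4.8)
The plan is to prove the Proposition by direct verification: substitute the explicit one-forms \eqref{integrated one-forms} into each of the three equations \eqref{structure equations-case3} and check equality. Since $(\alpha,\beta,\gamma)$ are coordinates, every two-form expands uniquely in the basis $d\alpha\wedge d\beta,\ d\beta\wedge d\gamma,\ d\alpha\wedge d\gamma$, so the entire argument reduces to matching three scalar coefficients per equation.

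First I would dispose of the two easy identities. From $\omega^x=\alpha\,d\gamma$ one gets $d\omega^x=d\alpha\wedge d\gamma$, and a one-line computation gives $\tfrac{5}{12}\,\omega^u\wedge\omega^x=d\alpha\wedge d\gamma$, establishing the first equation. For $\omega^u$ the exact piece $\tfrac{12}{5}\,d\alpha/\alpha$ is killed by $d$, leaving $d\omega^u=-d\beta\wedge d\gamma$; expanding $\omega^x\wedge\omega^p+I_7\,\omega^u\wedge\omega^x$ one finds the two $I_7$-proportional terms cancel, recovering $-d\beta\wedge d\gamma$. A useful observation is that both of these identities hold irrespective of the coordinate dependence of $I_7$.

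The crux is the equation for $d\omega^p$, and the hard part will be handling $dI_7$ correctly. Differentiating $\omega^p$ produces the term $\tfrac{12}{5\alpha}\,dI_7\wedge d\alpha$, so I need the differential of $I_7$ in these coordinates; I would supply it from the recurrence relation \eqref{dI}. Substituting \eqref{integrated one-forms} into $dI_7=I_8\,\omega^x-\tfrac54 I_7\,\omega^u+\tfrac56\,\omega^p$ and reading off coefficients yields $I_{7,\beta}=\tfrac{5}{6\alpha}$ and $I_{7,\alpha}=-I_7/\alpha$ (the latter is immaterial for $d\omega^p$, since it multiplies $d\alpha\wedge d\alpha=0$). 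The essential, easily missed point is that $I_7$ is \emph{not} a function of $\gamma$ alone: its $\beta$-derivative is nonzero and is exactly what balances the $d\alpha\wedge d\beta$ term below.

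With $dI_7$ in hand I would expand both sides of $d\omega^p=\tfrac32 I_7\,\omega^p\wedge\omega^x+\tfrac54\,\omega^p\wedge\omega^u+\tfrac15\,\omega^x\wedge\omega^u$ and match coefficients: the $d\alpha\wedge d\beta$ term balances precisely because $I_{7,\beta}=\tfrac{5}{6\alpha}$; the $d\beta\wedge d\gamma$ term reproduces $\Gamma_\beta=\tfrac32 I_7-\tfrac{5\beta}{4\alpha}$, the second equation of \eqref{PDE for Gamma}; and the $d\alpha\wedge d\gamma$ term, after collecting the $\Gamma$ and $\Gamma_\alpha$ contributions, reproduces $\Gamma_\alpha+\tfrac3\alpha\Gamma=\tfrac{12 I_{7,\gamma}}{5\alpha}+\tfrac{18 I_7^2}{5}-\tfrac{3 I_7\beta}{\alpha}-\tfrac{12}{25}$, the first equation of \eqref{PDE for Gamma}. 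Hence the structure equation for $\omega^p$ is equivalent to $\Gamma$ solving \eqref{PDE for Gamma}, which is the hypothesis. To confirm the statement is not vacuous, I would finally check compatibility $\Gamma_{\alpha\beta}=\Gamma_{\beta\alpha}$ of the two equations; this cross-derivative identity holds once one uses $I_{7,\beta}=\tfrac{5}{6\alpha}$ together with $I_{7,\alpha}=-I_7/\alpha$ (both from \eqref{dI}), guaranteeing existence of the claimed $\Gamma$.
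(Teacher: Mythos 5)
Your verification is correct and coincides with what the paper does implicitly: the paper states this Proposition without proof, treating it as a routine coefficient-matching computation, and immediately afterwards extracts the same relations $I_{7,\beta}=\tfrac{5}{6\alpha}$, $\alpha I_{7,\alpha}=-I_7$ by substituting \eqref{integrated one-forms} into \eqref{dI}, exactly as you do. Your added compatibility check $\Gamma_{\alpha\beta}=\Gamma_{\beta\alpha}$ is a small bonus that the paper leaves to the later explicit solution $\Gamma=-\tfrac{3\alpha}{25}+g(\gamma)\alpha^{-3}$.
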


Substituting \eqref{integrated one-forms} into \eqref{dI} we obtain
the differential equations
\[ I_{7,\beta} = \frac{5}{6 \alpha},\qquad \alpha I_{7,\alpha} = -
I_7,\qquad I_8=\frac{I_{7,\gamma}}{\alpha} - \frac{5 \beta I_7}{4
  \alpha} - \frac{5 \Gamma}{6\alpha}.
\]
Integrating the first two equations, we deduce that
\begin{equation}\label{invariant I}
I_7 = \frac{5 \beta}{6 \alpha} + \frac{h(\gamma)}{\alpha},
\end{equation}
where $h(\gamma)$ is an arbitrary (analytic) function, while the third
equation defines $I_8$ in terms of $I_7$ and $\Gamma$. The coframe
\eqref{integrated one-forms} is not uniquely defined.  The degree of
freedom is given by the infinite-dimensional Lie pseudo-group
\begin{equation}\label{coframe liberty}
  \begin{gathered}
    \overline{\gamma} = \sigma(\gamma),\qquad
    \overline{\alpha}=\frac{\alpha}{\sigma^\prime},\qquad
    \overline{\beta}=\frac{\beta}{\sigma^\prime} - \frac{12
      \sigma^{\prime\prime}}{5 (\sigma^\prime)^2},\\ 
    \overline{\Gamma}=\frac{\Gamma}{\sigma^\prime} + \frac{12
      \overline{I_7} \,\sigma^{\prime\prime}}{5 (\sigma^\prime)^2} +
    \frac{\beta \sigma^{\prime\prime}}{\alpha(\sigma^\prime)^2} +
    \frac{12\sigma^{\prime\prime\prime}}{5 \alpha (\sigma^\prime)^2}
    - \frac{24 (\sigma^{\prime\prime})^2}{5 \alpha
      (\sigma^\prime)^3},
\end{gathered}
\end{equation} 
where $\overline{\gamma}=\sigma(\gamma)$ is a local diffeomorphism of
the real line.  Under the pseudo-group action \eqref{coframe liberty},
the invariant \eqref{invariant I} transforms according to
\[
\overline{I_7}=\frac{5\beta}{6 \alpha} - \frac{2
  \sigma^{\prime\prime}}{\alpha \sigma^\prime} + \frac{\sigma^\prime
  \overline{h}}{\alpha},\qquad \text{with}\qquad \overline{h}=h \comp
\sigma.
\]
By choosing $\sigma(\gamma)$ such that 
\[ 2 \sigma^{\prime\prime}=(\sigma^\prime)^2 \overline{h},\] we can
assume $h(\gamma) = 0$ in \eqref{invariant I}.  Doing so and solving
the differential equations \eqref{PDE for Gamma} we find that
\[ \Gamma(\alpha, \beta, \gamma)=- \frac{3 \alpha}{25} +
\frac{g(\gamma)}{\alpha^3}, \]
where $g(\gamma)$ is an arbitrary (analytic) function which cannot be
removed by some change of variables.

\begin{proposition}
The one-forms
\begin{equation}\label{coframe coordinate expressions}
  \omega^x = \alpha\, d\gamma,\qquad 
  \omega^u = \frac{12}{5} \frac{d\alpha}{\alpha} - \beta\, d\gamma,\qquad
  \omega^p = \frac{d\beta}{\alpha} + \frac{2 \beta}{\alpha^2} d\alpha +
  \bigg(\frac{g(\gamma)}{\alpha^3}-\frac{3\alpha}{25}\bigg) d\gamma 
\end{equation}
satisfy the structure equations \eqref{structure equations-case3} with
\begin{equation}\label{I - h=0}
I_7(\alpha,\beta,\gamma)=\frac{5\beta}{6\alpha}.
\end{equation}
\end{proposition}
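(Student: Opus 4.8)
The plan is to derive this statement as the specialization of the preceding Proposition (the one governing the coframe \eqref{integrated one-forms}), obtained by setting $h(\gamma)=0$ and choosing the particular solution $\Gamma(\alpha,\beta,\gamma) = -\tfrac{3\alpha}{25} + g(\gamma)\alpha^{-3}$ of the PDE system \eqref{PDE for Gamma}. First I would confirm that the forms \eqref{coframe coordinate expressions} coincide with \eqref{integrated one-forms} under these choices. With $h=0$, relation \eqref{invariant I} gives $I_7 = \tfrac{5\beta}{6\alpha}$, so that the term $\tfrac{12 I_7}{5}\tfrac{d\alpha}{\alpha}$ becomes $\tfrac{2\beta}{\alpha^2}\,d\alpha$, matching the second term of $\omega^p$; and $\Gamma\,d\gamma$ reproduces the $\bigl(g(\gamma)\alpha^{-3}-\tfrac{3\alpha}{25}\bigr)\,d\gamma$ term. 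The two coframes are therefore identical.

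Next I would verify the hypothesis of the preceding Proposition, namely that this $\Gamma$ solves \eqref{PDE for Gamma} when $I_7=\tfrac{5\beta}{6\alpha}$, so that $I_{7,\gamma}=0$. Since $\Gamma$ is independent of $\beta$ we have $\Gamma_\beta = 0$, and the second equation of \eqref{PDE for Gamma} reduces to the identity $\tfrac{3}{2}\cdot\tfrac{5\beta}{6\alpha} - \tfrac{5\beta}{4\alpha}=0$. For the first equation, a short computation gives $\Gamma_\alpha + \tfrac{3}{\alpha}\Gamma = -\tfrac{12}{25}$ (the two terms $\pm 3g(\gamma)\alpha^{-4}$ cancelling), while the right-hand side collapses to $-\tfrac{12}{25}$ once the $\beta^2$-contributions $\tfrac{18}{5}I_7^2$ and $-\tfrac{3 I_7\beta}{\alpha}$ cancel and $I_{7,\gamma}=0$ is used. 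With both equations satisfied, the preceding Proposition immediately yields the structure equations \eqref{structure equations-case3} with $I_7$ as in \eqref{I - h=0}.

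Alternatively, the statement can be checked by direct exterior differentiation, bypassing the earlier Proposition. Here $d\omega^x = d\alpha\wedge d\gamma$ matches $\tfrac{5}{12}\,\omega^u\wedge\omega^x$ since the $\beta\,d\gamma$ part of $\omega^u$ drops out; and $\omega^u\wedge\omega^x = \tfrac{12}{5}\,d\alpha\wedge d\gamma$, which makes the $I_7\,\omega^u\wedge\omega^x$ correction in the $d\omega^u$ equation cancel the $\tfrac{2\beta}{\alpha}\,d\gamma\wedge d\alpha$ term coming from $\omega^x\wedge\omega^p$, leaving $d\omega^u = -\,d\beta\wedge d\gamma$. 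The only genuinely tedious piece is $d\omega^p$, where one must collect the $d\alpha\wedge d\beta$ and $d\alpha\wedge d\gamma$ coefficients and confirm they reproduce $\tfrac{3}{2}I_7\,\omega^p\wedge\omega^x + \tfrac{5}{4}\,\omega^p\wedge\omega^u + \tfrac{1}{5}\,\omega^x\wedge\omega^u$.

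I expect no real obstacle: the statement is a verification whose substance was already carried out in deriving \eqref{invariant I} and in integrating \eqref{PDE for Gamma}. The main point requiring care is bookkeeping consistency --- ensuring that the single function $I_7=\tfrac{5\beta}{6\alpha}$ is used simultaneously in the coframe, in the PDE for $\Gamma$, and in the structure equations, so that the specialization of the earlier Proposition is legitimate.
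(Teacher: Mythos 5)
Your proposal is correct and matches the paper's (implicit) argument: the paper states this Proposition without a separate proof precisely because it is the specialization of the preceding Proposition obtained by normalizing $h(\gamma)=0$ via the coframe freedom and inserting the solved $\Gamma(\alpha,\beta,\gamma)=-\tfrac{3\alpha}{25}+g(\gamma)\alpha^{-3}$, exactly as you do. Your verification that this $\Gamma$ satisfies \eqref{PDE for Gamma} with $I_7=\tfrac{5\beta}{6\alpha}$ (and the optional direct check by exterior differentiation) is accurate.
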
 
 
\begin{proposition}
Let $\omega^x$, $\omega^u$, $\omega^p$ be given by \eqref{coframe coordinate expressions}.  Then there exist functions $a_1$, $a_2$, $a_3$, $a_4$ and a change of variables
\begin{equation}\label{change of variables}
\alpha=\alpha(x,u,p),\qquad \beta=\beta(x,u,p),\qquad \gamma=\gamma(x,u,p)
\end{equation}
such that \eqref{G structure} holds.
\end{proposition}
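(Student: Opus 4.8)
The plan is to reverse the moving-frame construction: starting from the abstract coframe \eqref{coframe coordinate expressions} on the $(\alpha,\beta,\gamma)$-space, I would reconstruct jet coordinates $(x,u,p)$ together with a defining function $q=f(x,u,p)$ whose associated $G$-structure coframe is exactly \eqref{coframe coordinate expressions}. The guiding principle is that each of the three forms carries one piece of the contact/ODE geometry: $\omega^x$ encodes the independent variable, $\omega^u$ the zeroth-order contact form $du-p\,dx$, and $\omega^p$ the first-order contact form $dp-q\,dx$ together with the equation itself.

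First I would locate the independent variable. From \eqref{structure equations-case3} one has $d\omega^x=\tfrac{5}{12}\,\omega^u\wedge\omega^x$, so $\omega^x\wedge d\omega^x=0$ and $\omega^x$ is Frobenius-integrable; indeed $\omega^x=\alpha\,d\gamma$. Accordingly I would set $x=\gamma$, $a_3=0$, and $a_4=\alpha$, so that $\omega^x=a_4\,dx$ matches the bottom row of \eqref{G structure}.

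Next I would peel off the remaining two rows. Matching $\omega^u=a_1(du-p\,dx)$ forces $u$ to be a first integral of the integrable pair $\{\omega^u,\omega^x\}$, hence a function of the base variables $(\alpha,\gamma)$; adopting the gauge $u=u(\alpha)$ and comparing the $d\alpha$- and $d\gamma$-coefficients yields $a_1=12/(5\alpha u')$ and $p=\tfrac{5}{12}\alpha u'\beta$. The residual freedom in the choice of the diffeomorphism $u=u(\alpha)$ is precisely the leftover gauge, to be fixed in the next subsection so as to bring the equation to the normal form \eqref{eq:ord4form}. Substituting these expressions into $\omega^p=a_2(du-p\,dx)+(a_1/a_4)(dp-q\,dx)$ and comparing coefficients then determines $a_2$ from the $d\alpha$-term and $q=f(x,u,p)$ from the $d\gamma$-term; finally, inverting $u=u(\alpha)$ and solving $p=\tfrac{5}{12}\alpha u'\beta$ for $\beta$ exhibits $a_1,a_2,a_4$ and $q$ as genuine functions of $(x,u,p)$, which completes the construction.

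The crux — and the only step I expect to be a real obstacle — is the simultaneous consistency of this coefficient matching. Since $\omega^p$ contains $d\beta$ whereas $du-p\,dx$ does not, the entire $d\beta$-content of $\omega^p$ must be supplied by $(a_1/a_4)\,dp$; because $\partial p/\partial\beta=1/a_1$, this forces the $d\beta$-coefficient of $\omega^p$ to equal $1/a_4=1/\alpha$, which is exactly what \eqref{coframe coordinate expressions} delivers. This is not a coincidence but a manifestation of the fact that the structure equations \eqref{structure equations-case3}, together with \eqref{dI}, are precisely the integrability conditions guaranteeing realizability by a second-order ODE. I would verify this single identity explicitly and confirm that the $d^2=0$ relations impose no further obstruction; after that, reading off $a_2$ and $q$ and specializing the gauge $u=u(\alpha)$ to recover \eqref{eq:ord4form} is routine.
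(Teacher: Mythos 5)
Your proposal is correct and follows essentially the same route as the paper: the paper simply exhibits the explicit choice $\gamma=x$, $\alpha=\sqrt{u}$, $\beta=6p/(5u)$ with $a_1=6/(5\alpha^2)$, $a_2=a_3=0$, $a_4=\alpha$ and verifies \eqref{G structure} directly, which is precisely the special case $u=\alpha^2$ of your gauge $u=u(\alpha)$. Your coefficient-matching derivation, including the $d\beta$-consistency check against $1/a_4=1/\alpha$, recovers exactly this family of solutions.
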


\begin{proof}
The functions $a_1$, $a_2$, $a_3$, $a_4$ and the change of variables \eqref{change of variables} are not unique.  Assuming $u>0$, we can choose
$$
a_1=\frac{6}{5\alpha^2},\qquad a_2=a_3=0,\qquad a_4= \alpha, 
$$
and
$$
\alpha=\sqrt{u},\qquad \beta=\frac{6 p}{5 u},\qquad \gamma = x.
$$
Then
\begin{equation}\label{final integrated coframe}
\omega^x = u^{1/2} dx,\qquad \omega^u = \frac{6}{5 u}[du - p\, dx],\qquad \omega^p = \frac{6}{5 u^{3/2}} \bigg[dp - \bigg( \frac{u^2}{10}-\frac{5 g(x)}{6} \bigg) dx\bigg].
\end{equation}
\end{proof}

We deduce from $\omega^p$ in \eqref{final integrated coframe} that the
second-order ordinary differential equations
\begin{equation}\label{representative ODE}
  u_{xx}=\frac{u^2}{10} - \frac{5 g(x)}{6},
\end{equation}
is one of the 2 families of differential equations with maximal
classification order, provided $I_7, I_8, I_9$ are functionally
independent.  A straightforward scaling transformation gives the form
shown in \eqref{eq:ord4form}.  We note that the coordinate expressions
for the dual equations to \eqref{representative ODE} are very
difficult to obtain.

Finally, for the differential equations \eqref{representative ODE},
the coordinate expressions of the invariant \eqref{I - h=0} and the
derivatives dual to $\omega^x,\omega^u,\omega^p$ are
\begin{equation}\label{explicit I}
  I_7 = \frac{p}{u^{3/2}},
\end{equation}
and 
\begin{gather*}
  D_X = \frac{1}{u^{1/2}}\bigg[ \frac{\partial}{\partial x} + p\,
  \frac{\partial}{\partial u} + \bigg( \frac{u^2}{10} - \frac{5
    g(x)}{6}\bigg)\frac{\partial}{\partial p} \bigg],\quad D_U =
  \frac{5 u}{6} \frac{\partial}{\partial u},\quad D_P = \frac{5
    u^{3/2}}{6} \frac{\partial}{\partial p},
\end{gather*}
respectively.  Hence, differentiating \eqref{explicit I} twice with respect to $D_X$ we deduce that the invariant signature manifold can be parametrized by the invariants
\[
I_7=\frac{p}{u^{3/2}},\qquad I_8=\frac{g(x)}{u^2},\qquad
I_9=\frac{(g^\prime(x))^4}{g^5(x)} = 256 \left(D_x( g(x)^{-1/4})\right)^4.
\]
The above invariants are independent if and only if $I_9$ is
non-constant, which gives the inequality in \eqref{eq:ord4form}.  If
this inequality holds, then the invariant classification of an
equation in this class is completely determined by the functional
relationship between $I_9$ and the invariant
\[
I_{10}=\frac{g(x) g^{\prime\prime}(x)}{(g^\prime(x))^2}.
\]

\begin{remark}
Now one can understand the necessity of \eqref{eq:ord4form} in Theorem \ref{thm:main}.  Indeed, when $I_9$ is constant, which is equivalent to the requirement that 
$$
D_x^2[g(x)^{-1/4}]=0,
$$ 
the IC order is 9 and the coframe \eqref{final integrated coframe} admits a 1-dimensional symmetry group, \cite[Theorem 8.22]{O-1995}.
\end{remark}

\begin{appendix}

\section{Cartan's duality for second-order ODEs}
\label{duality appendix}

In his study of projective connections, \cite{C-1955}, Cartan mentions
that there exists a notion of duality among second-order ordinary
differential equations.  Modern accounts can be found in
\cite{CS-2005,NS-2003}, but for completeness, we summarize the
construction below.

For second-order ordinary differential equations, a solution to an
initial-value problem may be represented by a two-parameter family
\begin{equation}\label{Phi}
\Phi(x,u,\overline{x},\overline{u})=0,
\end{equation}
where the parameters $\overline{x}$, $\overline{u}$ correspond to the initial values
\begin{equation*}
\overline{x}=u(0),\qquad \overline{u}=u_x(0).
\end{equation*}
Differentiating \eqref{Phi} twice with respect to $x$ we obtain the equations
\begin{equation}\label{equations defining q}
\Phi=\Phi_x + u_x \Phi_u = \Phi_{xx} + 2 u_x \Phi_{xu} + u_x^2 \Phi_{uu} + u_{xx} \Phi_u = 0, 
\end{equation}
which leads to the second-order differential equation 
\begin{equation}\label{second order ODE}
u_{xx}= q(x,u,u_x)
\end{equation}
when the parameters $\overline{x}$ and $\overline{u}$ are eliminated from \eqref{equations defining q}.  The dual equation to \eqref{second order ODE} is obtained from \eqref{Phi} by interchanging the roles of $(x,u)$ and $(\overline{x},\overline{u})$.  In other words, let $(x,u)$ be parameters and $\overline{u}=\overline{u}(\overline{x})$ a function of the independent variable $\overline{x}$.  Then, differentiating \eqref{Phi} with respect to $\overline{x}$ twice and getting rid of $(x,u)$ from the  equations obtained yields the \emph{dual equation}
\begin{equation}\label{dual equation}
\ou_{\ox\ox}=\overline{q}(\ox,\ou,\ou_{\ox}).
\end{equation}

Let us now consider the consequences of the contact transformation that sends \eqref{second order ODE} to \eqref{dual equation} on the point equivalence problem.   For the equations
$$
\Phi(x,u,\ox,\ou)=0,\qquad \Phi_x(x,u,\ox,\ou)+ p\, \Phi_u(x,u,\ox,\ou)=0,
$$
to determine $\ox$, $\ou$, $\op$ in terms of $x$, $u$, $p$ we must impose
$$
0\neq \Delta = \text{det}\begin{pmatrix}
0 & \Phi_{\ox} & \Phi_{\ou} \\
\Phi_x & \Phi_{x\ox} & \Phi_{x\ou} \\
\Phi_u & \Phi_{u\ox} & \Phi_{u\ou}
\end{pmatrix}
=-\Phi_u \Phi_{\ou}(\Phi_{x\ox} + p\, \Phi_{u\ox} + \op\, \Phi_{x\ou} + p\, \op\, \Phi_{u \ou}),
$$
which in particular requires
$$
\Phi_u\neq 0,\qquad \Phi_{\ou}\neq 0.
$$
Now, let
$$
\theta=du-p\,dx,\qquad \overline{\theta}=d\ou - \op\, d\ox,\qquad \theta_1 = dp - q\, dx,\qquad \overline{\theta}_1 = d\op - \overline{q}\, d\ox.
$$
Taking the exterior derivative of \eqref{Phi} we deduce that
$$
\theta = -\frac{\Phi_{\ou}}{\Phi_u} \,\overline{\theta}.
$$
On the other hand, the exterior derivative of $\Phi_x + p\, \Phi_u=0$ yields
$$
\theta_1 = \frac{\Delta}{\Phi_{\ou} \Phi_u^2} \,d\ox \quad \text{mod } \overline{\theta},
$$
while the exterior derivative of $\Phi_{\ox} + \op\, \Phi_{\ou} = 0$ gives
$$
dx = \frac{\Phi_{\ou}^2 \Phi_u}{\Delta} \,\overline{\theta}_1\quad \text{mod } \overline{\theta}.
$$
In matrix form,
$$
\begin{pmatrix}
\theta \\ dx \\ \theta_1
\end{pmatrix} 
=
\begin{pmatrix}
-a_1 & 0 & 0 \\
a_2 & a_1/a_4 & 0 \\
a_3 & 0 & a_4
\end{pmatrix}
\begin{pmatrix}
\overline{\theta} \\ \overline{\theta}_1 \\ d\ox
\end{pmatrix},
\qquad \text{where}\qquad a_1 = \frac{\Phi_{\ou}}{\Phi_u},\qquad a_4 = \frac{\Delta}{\Phi_{\ou} \Phi_u^2}.
$$
We note that, up to a sign in the first entry, the $3\times 3$ matrix
made of the functions $a_1$, $a_2$, $a_3$, $a_4$ is an element of the
structure group \eqref{G structure}.  Hence, under the contact
transformation $(x, u, p) \leftrightarrow (\ox, \ou, \op)$, the lifted
coframe \eqref{eq:omxudef} and \eqref{eq:ompdef} undergoes the
transformation
\begin{equation}\label{coframe transformation}
\omega^x \leftrightarrow \omega^{\op},\qquad \omega^p \leftrightarrow \omega^{\ox},\qquad \omega^u \leftrightarrow - \omega^{\ou}.
\end{equation}

To understand the duality between cases {\bf II} and {\bf III} of the
equivalence problem in Section \ref{sect:branching} we consider the
structure equations \eqref{eq:uredseq} obtained once the
normalizations \eqref{eq:liftedunorms} are done.
%
Focusing on the structure equations 
$$
d\tmu_U=\cdots + \frac{1}{6}\tQ_{P^4}\,\tomega^u \wedge \tomega^p,\qquad
d\tnu_{UX}=\cdots + \frac{1}{6}\tQ_{P^2X^2}\, \tomega^u \wedge \tomega^x,
$$ 
we observe that under the coframe transformation \eqref{coframe
  transformation} the role of the universal invariants $\tQ_{P^4}$,
$\tQ_{P^2X^2}$ is interchanged, which is exactly what happens when
switching between cases {\bf II} and {\bf III}.

\section{Proofs of Propositions \ref{prop:norm} and \ref{prop:Hstab}}
\label{apdx:norm}
We start by proving Proposition \ref{prop:norm}.  Let us first
consider local transversality.  First, we observe that $dX, dU,
dP, dQ_{ijk}, \mu_{ij}, \nu_{ij}$ and $\omega^x, \omega^u,
\omega^p,\vartheta_{ijk},\mu_{ij}, \nu_{ij}$ are two choices of
right-invariant coframes on $\E\ii$.  Then, we note that the
left-action of $\Diff(\R^2)$ in \eqref{eq:phindef} generates the
source distribution of $\bs\colon\E\ii\to\J\ii$ given by
 \[ \ker \{ dx,du,dp,\theta_{ijk} \} = \ker \{
 \omega^x,\omega^u,\omega^p, \vartheta_{ijk} \},\] while the
 right-action generates the target distribution of $\bt\colon\E\ii\to
 \J\ii$, given by $ \ker \{ dX, dU, dP, dQ_{ijk} \}$.  Hence, our goal
 is to show that the tangent space to $\tE$, defined by equations
 \eqref{eq:liftedunorms}, is transverse to the source distribution on
 $\E\ii$.  

According to \eqref{infinitesimal
  generator},
\begin{align*}
  \phi(x,u,p,q) &= \eta_{xx} + q(\eta_u-2\xi_x) + p(2
  \eta_{xu}-\xi_{xx})-3pq\xi_u + p^2(\eta_{uu}-2\xi_{xu})-p^3
  \xi_{uu}.
\end{align*}
Writing
\begin{align*}
  &\xi_{ij} = \xi_{x^iu^j},& &\hskip -2cm\xi\n = \{ \xi_{ij}\colon 0\leq i+j\leq n \},\\
  &\eta_{ij} = \eta_{x^i u^j},& &\hskip -2cm\eta\n = \{ \eta_{ij} \colon 0 \leq
  i+j \leq n \},
\end{align*}
the prolongation formula \eqref{prolongation formula} yields, when $p=0$,
\begin{align*}
  \phi^{000} &\equiv \eta_{xx} &&\mod p, \xi^{(1)}, \eta^{(1)};\\
  \phi^{001} &\equiv  2\eta_{xu}-\xi_{xx}  &&\mod p, \xi^{(1)}, \eta^{(1)};\\
  \phi^{002} &\equiv 2\eta_{uu}-4\xi_{xu} &&\mod p, \xi^{(1)},
  \eta^{(1)};\\
  3\phi^{0,j-2,2} -2 \phi^{1,j-3,3} &\equiv 6 \eta_{0j} &&\mod
  \xi^{(j-1)},\eta^{(j-1)} ,\; j\geq 3;\\
  4\phi^{0,j-1,1} - \phi^{1,j-2,2} &\equiv 6 \eta_{1j} &&\mod
  \xi^{(j)},\eta^{(j)} ,\; j\geq 2;\\
  \phi^{i-2,j0} &\equiv \eta_{ij} &&\mod p,\xi^{(i+j-1)},
  \eta^{(i+j-1)},\; i\geq 2, j\geq 0;  \\
  \phi^{0,j-2,3} &\equiv - 6 \xi_{0j} &&\mod
  \xi^{(j-1)},\eta^{(j-1)} ,\; j\geq 2;\\
  \phi^{1,j-2,3} &\equiv -6 \xi_{1j} &&\mod
  \xi^{(j)},\eta^{(j)},\; j\geq 2 ;\\
  \phi^{0,j,1} - \phi^{1,j-1,2} &\equiv 3 \xi_{2j} &&\mod
  \xi^{(j+1)},\eta^{(j+1)},\; j\geq 1;\\
  \phi^{i-2,j,1} - 2 \phi^{i-3,j+1,0} &\equiv \xi_{ij} &&\mod
  \xi^{(i+j-1)},\eta^{(i+j-1)} ,\;  i\geq 3,j\geq 2.
\end{align*}
By the universal recurrence formulas \eqref{eq:unirec}, when $P=0$ we have, modulo $
\omega^x, \omega^u, \omega^p, \vartheta_{ijk}$,
\begin{align}
  \nonumber
  d X & \equiv \mu;\\ \nonumber
  dU & \equiv \nu ;\\ \nonumber
  dP & \equiv \nu_X ;\\ \nonumber 
  dQ_{000} &\equiv \nu_{XX} &&\mod \mu^{(1)}, \nu^{(1)};\\ \nonumber
  dQ_{001} &\equiv 2\nu_{XU}-\mu_{XX} &&\mod \mu^{(1)}, \nu^{(1)};\\ \nonumber
  dQ_{002} &\equiv 2\eta_{UU}-4\mu_{XU} &&\mod \mu^{(1)},
  \nu^{(1)};\\ \nonumber
  3dQ_{0,j-2,2} -2 dQ_{1,j-3,3} &\equiv 6 \nu_{0j} &&\mod
  \mu^{(j-1)},\nu^{(j-1)} ,\; j\geq 3;\\ 
  \label{eq:nuxx}
  4dQ_{0,j-1,1} - dQ_{1,j-2,2} &\equiv 6 \nu_{1j} &&\mod
  \mu^{(j)},\nu^{(j)} ,\; j\geq 2;\\ \nonumber 
  dQ_{i-2,j0} &\equiv \nu_{ij} &&\mod \mu^{(i+j-1)},
  \nu^{(i+j-1)},\; i\geq 2,j\geq 0;  \\ \nonumber
  dQ_{0,j-2,3} &\equiv - 6 \mu_{0j} &&\mod
  \mu^{(j-1)},\nu^{(j-1)} ,\; j\geq 2;\\ \nonumber
  dQ_{1,j-2,3} &\equiv -6 \mu_{1j} &&\mod
  \mu^{(j)},\nu^{(j)},\; j\geq 2 ;\\ \nonumber
  dQ_{0,j,1} - dQ_{1,j-1,2} &\equiv 3 \mu_{2j} &&\mod
  \mu^{(j+1)},\nu^{(j+1)},\; j\geq 1;\\ \nonumber 
  dQ_{i-2,j,1} - 2 dQ_{i-3,j+1,0} &\equiv \mu_{ij} &&\mod
  \mu^{(i+j+1)},\nu^{(i+j+1)} ,\; i\geq 3,\;j\geq 2.
\end{align}
It follows that $\omega^x, \omega^u, \omega^p, \nu_U, \mu_X, \mu_U,
\nu_{XU} , \nu_{UU},\vartheta_{ijk}$ form a basis of $T^*\tE$.  As a
consequence, since, $\omega^x, \omega^u,\omega^p,\vartheta_{ijk}$ are
linearly independent on $\tE$, the latter is transverse to the source
distribution.

Now we prove surjectivity.  Let $J = \{ (i,j,k) \colon k<2 \text{ or }
(i<2 \text{ and } k<4) \}$ be the indicated set of indices.  We have
to show the consistency of the following equations:
\[ \hP(p,X_x,X_u,U_x,U_u)=0\] which is equivalent to
\[ p\, U_u + U_x = 0;\] and
\[ \hQ_{ijk}(p,q^{(i+j+k)}, X_0^{(i+j+k+2)}, U_0^{(i+j+k+2)})=0,\quad
(i,j,k) \in J. \] Since $\G$ is transitive on $N^3 = \J^1(\R,\R)$, no
generality is lost if we set $p=0$ above; that is, it suffices to
demonstrate the consistency of the equations
\[ 0 = \hQ_{ijk}(0,q^{(i+j+k)}, X_0^{(i+j+k+2)}, U_0^{(i+j+k+2)}),\quad
(i,j,k)\in J.
\] Furthermore, the above equations can be greatly simplified by
restricting to the sub-pseudo-group of $\G$ defined by
\begin{equation}
  \label{eq:XxUuUx}  
    X_x = U_u = 1,\quad U_x=X_u = 0. 
\end{equation}
At $p=0$ the transformation law for $Q_{ijk}, k<4$, is, modulo
\eqref{eq:XxUuUx} and $X^{(i+j+1)}, U^{(i+j+1)}$, affine to leading
order
\begin{align*}
  Q_{ij0} &\equiv  q_{ij0} + U_{i+2,j} ,\\
  Q_{ij1} &\equiv q_{ij1} + 2 U_{i+1,j+1} - X_{i+2,j},\\
  Q_{ij2} &\equiv q_{ij2} + 2U_{i,j+2}- 4 X_{i+1,j+1},  \\
  Q_{ij3} &\equiv q_{ij3} - 6 X_{i,j+2} .
\end{align*}
We now impose relations for which $(i,j,k)\in J$.  Setting
$Q_{i,0,0}=Q_{i,j+1,0}=Q_{i+1,j,1}=0$ fixes $U_{i+2,0},U_{i+2,j+1},
X_{i+3,j}$.  Setting $Q_{0j3} = Q_{1j3}=0$ fixes $X_{0,j+2},
X_{1,j+2}$. Fixing $Q_{0,j+1,2} = Q_{1,j+1,2}=0$ determines
$U_{0,j+3},U_{1,j+3}$, and then setting $Q_{0,j+2,1} = 0$ specifies
$X_{2,j+2}$.  Consequently, the remaining relations are
\begin{align*}
  Q_{001}&\equiv q_{001} + 2 U_{11}- X_{20} ,\\
  Q_{011}&\equiv q_{011} + 2 U_{12}- X_{21}, \\
  Q_{102} &\equiv q_{102} + 2U_{12}-4X_{21},\\
  Q_{002} &\equiv q_{002} + 2U_{02}-4X_{11},\\
  Q_{012} &\equiv q_{012} + 2U_{03}-4X_{12}.
\end{align*}
Setting the left-hand side to zero, these relations fix $X_{12},
X_{21}, U_{12} , X_{20}, X_{11}$ leaving $U_{11}$ and $U_{02}$ as free
variables.

We now consider the proof of Proposition \ref{prop:Hstab}.  Let
\[ \E_\Xi= \bs^{-1}(\Xi) \cap \bt^{-1}(\Xi)\subset \tE\] be the
subgroupoid of transformations that preserve $\Xi$, and let $\E_H$ be
the subgroupoid corresponding to the action of $H$ on $\J\ii$.  It is
straightforward to check that $H$ preserves $\Xi$; that is,
$\E_H\subset \E_\Xi$. Equations \eqref{eq:nuxx} also demonstrate that
$\nu_U, \mu_X, \mu_U, \nu_{XU} , \nu_{UU}$ form a basis of one-forms for
the source fibres of $\bs\colon\E_\Xi\to\Xi$.  Since $H$ is
5-dimensional, it follows that $\E_H = \E_\Xi$ by dimensional
exhaustion, as was to be shown.
\end{appendix}



\begin{thebibliography}{99}

\bibitem{BB-1999} Babich, M.V., and Bordag, L.A., Projective
  differential geometrical structure of the Painlev\'e equations,
  \textit{J. Diff. Eq.} \textbf{157} (1999) 452--485.

\bibitem{B-2011} Bryant R., Cartan's generalization of Lie's third
  theorem, presentation at the CRM workshop on moving frames,
  Montr\'eal, 2011.

\bibitem{BCGGG-1991} Bryant, R.L., Chern, S.S., Gardner, R.B.,
  Goldschmidt, H.L., and Griffiths, P.A., {\it Exterior Differential
    Systems}, Mathematical Sciences Research Institute Publications,
  Springer--Verlag, New York, 1991.

\bibitem{C-1955} Cartan, \'E., Sur les vari\'et\'es \`a connexion
  projective, in {\it Oeuvres Compl\`etes}, Part. III, Vol. 1,
  Gauthier--Villars, Paris, 1955, pp. 825--861.


\bibitem{CS-2005}
Crampin, M., and Saunders, D.J., Cartan's concept of duality for second-order ordinary differential equations, {\it J. Geo. Phys.} {\bf 54} (2005) 146--172.

\bibitem{D-2009}
Dridi, R., On the geometry of the first and second Painlev\'e equations, {\it J. Phys. A: Math. Theor.} {\bf 42} (2009) 125201.

\bibitem{G-1983}  
Gardner, R.B., Differential geometric methods interfacing control theory, in:  {\it Differential Geometric Control Theory}, R.W.\ Brockett et.\ a., eds.\ Birkh\"auser, Boston, 1983, pp. 117--180. 

\bibitem{G-1989} Gonz\'alez--L\'opez, A., On the linearization of
  second order ordinary differential equations, {\it
    Lett. Math. Phys.} {\bf 17} (1989) 341--349.

\bibitem{GTW-1989} Grissom, C., Thompson, G., and Wilkens, G.,
  Linearization of second order ordinary differential equations via
  Cartan's equivalence method, \textit{J. Diff. Eq.} {\bf 77} (1989) 1--15.

\bibitem{I-2004}
Ibragimov, N.H., and Magri, F., Geometric proof of Lie's linearization theorem, {\it Nonlinear Dynamics} {\bf 36} (2004) 41--46.

\bibitem{KLS-1985} Kamran, N., Lamb, K.G., and Shadwick, W.F., The
  local equivalence problem for $d^2y/dx^2=F(x,y,dy/dx)$ and the
  Painlev\'e transcendents, \textit{J. Diff. Geo.}
  \textbf{22} (1985) 139--150.

\bibitem{K-1989} Kamran, N., Contribution to the Study of the
  Equivalence Problem of Elie Cartan and its Applications to Partial
  and Ordinary Differential Equations, \textit{M\'em.  Cl.
    Sci. Acad. Roy.  Belg.} \textbf{45} (1989) Fac 7.

\bibitem{K-1980}
Karlhede, A., A review of the geometrical equivalence of metrics in 
general relativity, {\it Gen. Rel. Grav.} {\bf 12} (1980) 693--707.

\bibitem{K-2009-0}
Kartak, V.V., Explicit solution of the equivalence problem for certain Painlev\'e equations, {\it Ufimskii Math. Journal} {\bf 1}: 3 (2009) 46--56 (in Russian);   preprint:  ArXiv:0909.1987.

\bibitem{K-2013}
Kartak, V.V., Point classification of second order ODEs and its application to Painlev\'e equations, ArXiv:math.CA\#1204.0174, 2013.

\bibitem{KO-2003}
Kogan, I., and Olver, P.J., Invariant Euler--Lagrange equations and the invariant variational bicomplex, {\it Acta Appl. Math.} {\bf 76} (2003) 137--193. 

\bibitem{K-2009}
Kruglikov, B., Point classification of 2nd order ODEs: Tresse classification revisited and beyond, in {\it Differential equations -- Geometry, Symmetries and Integrability}, Proc. ABEL SYMPOSIUM-2008, 199--221, Springer Verlag, 2009.

\bibitem{L-1924}
Lie, S., Klassifikation und Integration von gew\"ohnlichen Differentialgleichungen zwischen $x$, $y$, die eine Gruppe von Transformationen gestatten I--IV, in:  {\it  Gesammelte Abhandlungen}, Vol. 5, B.G. Teubner, Leipzig, 1924, pp. 240--310, 362--427, 432--448.

\bibitem{L-1889}
Liouville, R.,  Sur les invariants de certaines \'equations diff\'erentielles et sur leurs applications, {\it J. de l'\'Ecole Polytechnique} {\bf 59} (1889) 7--76.

\bibitem{MO-2010}
Mari-Beffa, G., and Olver, P.J., Poisson structures for geometric curve flows on semi-simple homogeneous spaces, {\it Reg. and Chaotic Dyn.} {\bf 15} (2010) 532--550.

\bibitem{MP-2008}
Milson, R., and Pelavas, N., The type N Karlhede bound is sharp, {\it Class. Quantum Grav.} {\bf 25} (2008) 012001.

\bibitem{M-2010}
Morozov, O., Point equivalence problem for the sec on order ordinary differential equations.  I, II, {\it Vestnik MSTU CA} (Moscow State Technical University of Civil Aviation) {\bf 157} (2010) 92--99, 100--106 (in Russian).

\bibitem{NS-2003}
Nurowksi, P., and Sparling, G.A.J., Three-dimensional Cauchy--Riemann structures and second-order ordinary differential equations, {\it Class. and Quant. Grav.} {\bf 20}:  23 (2003) 4995--5016.

\bibitem{O-1993}
Olver, P.J., {\it Applications of Lie Groups to Differential Equations}, Springer, Vol. 107, New York, 1993.

\bibitem{O-1995}
Olver, P.J.,  {\it Equivalence, Invariants, and Symmetry}, Cambridge University Press, New York, 1995.

\bibitem{O-2008} Olver, P.J., Invariant submanifold flows, {\it
    J. Phys. A} {\bf 41} (2008) 344017.

\bibitem{O-2011} Olver, P.J., Recursive moving frames, \textit{Results
    Math.} \textbf{60} (2011) 423-452.

\bibitem{OP-2005} Olver, P.J., and Pohjanpelto, J., Maurer--Cartan
  forms and the structure of Lie pseudo-groups, {\it Selecta Math.}
  {\bf 11} (2005) 99--126.

\bibitem{OP-2008}
Olver, P.J., and Pohjanpelto, J., Moving frames for Lie pseudo-groups, {\it Canadian J. Math.} {\bf 60} (2008) 1336--1386.

\bibitem{OP-2009}
Olver, P.J., and Pohjanpelto, J., Differential invariant algebras of Lie pseudo-groups, {\it Adv. in Math.} {\bf 222} (2009) 1746--1792.

\bibitem{OPV-2009}
Olver, P.J., Pohjanpelto, J., and Valiquette, F., On the structure of Lie pseudo-groups, {\it SIGMA} {\bf 5} (2009) 077.

\bibitem{Q-2007}
Qadir, A., Geometric linearization of ordinary differential equations, {\it SIGMA} {\bf 3} (2007) 103.

\bibitem{R-1980} Riordan, J., \textit{An Introduction to Combinatorial
    Analysis}, Princeton University Press, Princeton, N.J., 1980.

\bibitem{SML-1987}
Sarlet, W., Mahomed, F.M., and Leach, P.G.L., Symmetries of nonlinear differential equations and linearisation, {\it J. Phys. A.: Math. and Gen.} {\bf 20} (1987) 277--292.

\bibitem{S-1997}
Sharipov, R.A., On the point transformations for the equation $y^{\prime\prime} = P + 3Q y^\prime + 3R{y^\prime}^2 +S {y^\prime}^3$, ArXiv:solv-int \#9706003, 1997.

\bibitem{S-1998}
Sharipov, R.A., Effective procedure of point classification for the equations $y^{\prime\prime} = P + 3Q y^\prime + 3R{y^\prime}^2 +S {y^\prime}^3$, ArXiv:math\textbackslash.DG\#9802027, 1998.

\bibitem{TV-2011}
Thompson, R., and Valiquette, F., On the cohomology of the invariant Euler--Lagrange complex, {\it Acta Appl. Math.} {\bf 116}: 2 (2011) 199--226.

\bibitem{T-1896}
Tresse, M.A.,  {\it D\'etermination des Invariants Ponctuels de l'\'Equation Diff\'erentielle Ordinaire du Second Ordre $y^{\prime\prime}=\,\omega(x,y,y^\prime)$}, S. Hirzel, Leipzig, 1896.

\bibitem{V-2012} 
Valiquette, F., Solving local equivalence problems with the equivariant moving frame method, {\it SIGMA} {\bf 9} (2013), 029.

\bibitem{Y-2004}
Yumaguzhin, V.A., On the obstruction to linearizability of 2-order ordinary differential equations,
{\it Acta Appl. Math.} {\bf 83} (2004) 133--148.


\end{thebibliography}
\end{document}